\numberwithin{equation}{section}
\definecolor{purple}{rgb}{0.9,0,0.8}
\definecolor{gray}{rgb}{0.7,0.7,0.7}
\newcommand{\abbr}[1]{{\sc\lowercase{#1}}}
\newtheorem{thm}{Theorem}[section]
\newtheorem{cor}[thm]{Corollary}
\newtheorem{lem}[thm]{Lemma}
\newtheorem{ppn}[thm]{Proposition}
\newtheorem{defn}[thm]{Definition}
\newtheorem{pbm}[thm]{Problem}
\theoremstyle{definition}
\newtheorem{remark}[thm]{Remark}
\newcommand{\beq}{\begin{equation}}
\newcommand{\eeq}{\end{equation}}
\newcommand{\ep}{\epsilon} 
\newcommand{\vep}{\varepsilon}
\newcommand{\bB}{\mathbb{B}}
\newcommand{\bD}{\mathbb{D}}
\newcommand{\E}{\mathbb{E}}
\newcommand{\bE}{\mathbb{E}}
\newcommand{\bG}{\mathbb{G}}
\newcommand{\G}{\mathbb{G}}
\newcommand{\bI}{\mathbb{I}}
\newcommand{\N}{\mathbb{N}}
	\renewcommand{\P}{\mathbb{P}}	
\newcommand{\bP}{\mathbb{P}}
\newcommand{\R}{\mathbb{R}}
\newcommand{\Z}{\mathbb{Z}}
\newcommand{\bZ}{\mathbb{Z}}
\newcommand{\cF}{\mathcal{F}}
\newcommand{\wt}{\widetilde}
\newcommand{\wh}{\widehat}
\renewcommand{\emptyset}{\varnothing}
\newcommand{\grad}{\nabla}
\begin{document}
\title[Transience in growing subgraphs via evolving sets
 ]
{Transience in growing subgraphs\newline via evolving sets
}
\date{\today}

\author[A.\ Dembo]{Amir Dembo$^*$}
\author[R.\ Huang]{Ruojun Huang$^\diamond$}
\author[B.\ Morris]{Ben Morris$^\star$}
\author[Y.\ Peres]{Yuval Peres$^\dagger$}
\address{$^*$Department of Mathematics, Stanford University, Building 380, 
Sloan Hall, Stanford, CA 94305, USA}
\address{$^{*\diamond}$Department of Statistics, Stanford University,
 Sequoia Hall, 390 Serra Mall, Stanford, CA 94305, USA}
\address{$^\star$ Department of Mathematics, University of California at Davis, One Shields Ave, Davis, CA 95616, USA}
\address{$^\dagger$ Theory Group, Microsoft Research, One Microsoft Way,
Redmond, WA 98052, USA}

\thanks{This research was supported in part by NSF grants DMS-1106627
and CNS-1228828.}

\keywords{transience; time in-homogeneous Markov chains; 
heat kernel estimate; 
growing sub-graphs; conductance models; evolving sets; percolation} 

\subjclass[2010]{Primary 60J10; Secondary 60K37; 60K35} 

\maketitle
\begin{abstract}
We extend the use of 
random evolving sets to time-varying conductance models and utilize 
it to provide tight heat kernel upper bounds. It yields the transience of 
any uniformly lazy random walk, on $\bZ^d$, $d\ge3$, equipped 
with uniformly bounded above and below, independently time-varying 
edge conductances, of (effectively) non-decreasing in time 
vertex conductances, 
thereby affirming part of \cite[Conj. 7.1]{ABGK}.
\end{abstract}

\begin{section}{Introduction}
There has been much interest in random walks in random environment (see \cite{HMZ}). The challenge often comes from the highly non-reversible nature of the dynamics, which  can leave questions as fundamental as recurrence versus transience open. For example, the recurrence of linearly edge reinforced random walk with strong enough reinforcement strength on any graphs is just recently solved (\cite{ACK,ST,SZ}). 
Many questions in this general area are treated in an ad-hoc manner, and the development of methods in order to fully or partially resolve them is just as interesting as the questions themselves.  

The case when the evolution of the environment is independent of the stochastic process is better understood (e.g. \cite{DKL}), and there are conjectures on the emergence of universality (cf. \cite[Conj. 7.1]{ABGK} and \cite[Conj. 1.2, 1.8, 1.10]{DHS}). Specifically, \cite{DHS} conjecture that whenever a graph $\bG_\infty$ 
is recurrent, then any graph sequence $\{\bG_t\}_{t\in\N}$ dynamically growing towards 
$\bG_\infty$ is also recurrent, for the discrete time, simple random walk $\{X_t\}_{t\in\N}$ 
taking steps in $\{\bG_t\}_{t\in\N}$; and whenever $\bG_0$ is transient, then 
any growing sequence $\{\bG_t\}$ of uniformly bounded degrees,
starting from $\bG_0$ is transient.
 
Essentially the same phenomenon is conjectured
in \cite{ABGK} for the general setting of monotonically time varying 
conductance models, which are also the focus of the present work. That is,
the stochastic process $\{X_t\}_{t \in \N}$ on a locally finite 
graph $\bG=(V,E)$ constructed as 
random walk in time varying edge conductances $\{\pi^{(t)}(x,y), t \in \N, (x,y) \in E\}$ 
which are changed independently of the sample path $t \mapsto X_t$. 
Specifically, the vertex conductances 
\beq\label{eq:vert-cond-def}
\pi^{(t)}(x)=\sum_{y \in V} \pi^{(t)}(x,y)\, \qquad x \in V \,, 
\eeq 
form the time-dependent reversing measure for $X_t$, and setting
$V_t = \{x \in V: \pi^{(t)}(x) > 0 \}$, the transition probability 
of the 
in-homogeneous Markov chain $X_t \in V_t$ is given by
\beq\label{eq:trans-def} 
P(t,x;t+1,y)=\frac{\pi^{(t)}(x,y)}{\pi^{(t)}(x)} \,, \qquad \forall (x,y) \in E, 
\; x \in V_t\,.
\eeq 
When $\bG$ is a tree, \cite[Theorems 5.1]{ABGK} proves recurrence of such $\{X_t\}$
provided all edge conductances 
$\pi^{(t)}(x,y)$ 
are 
positive,
non-decreasing in $t$,
and bounded above by $\pi^{(\infty)}(x,y)$ of a time-invariant recurrent
model, 
while \cite[Theorem 5.2]{ABGK}
establishes its transience when all edge conductances are
positive, non-increasing in $t$
and bounded below by $\pi^{(\infty)}(x,y)$ of a transient 
model. 
Both results apply when $\bG=\N$,
for which they are complemented 
by \cite[Theorems 4.2 and 4.4]{ABGK} that cover also the non-decreasing 
transient and non-increasing recurrent cases. We note in passing that all
four theorems 
allow for non-Markovian processes, where 
edge conductances depend on the past trajectory of the walk, but 
\cite[Section 6]{ABGK} shows that in general (specifically, 
when $\bG=\Z^2$), these results may fail under such dependence. Nevertheless, 
\cite[Conj. 7.1]{ABGK} proposes that the aforementioned four theorems 
hold on any locally finite graph $\bG$, provided its time varying edge 
conductances are independent of the walk's trajectory (i.e. for the 
Markovian evolution as in \eqref{eq:trans-def}).

The present work affirms part of
the transient case of this conjecture
(and a special case of 
\cite[Conj. 1.8]{DHS}), 
for $\bZ^d$, $d\ge3$
equipped with uniformly bounded non-decreasing vertex conductances (more generally,
extending \cite[Theorem 4.2]{ABGK} from $\bG=\N$ to all graphs having 
suitable isoperimetric properties). In contrast, the recurrent direction 
(i.e. obtaining heat kernel lower bounds), is mostly open. 

We prove transience by way of establishing an on-diagonal heat kernel upper bound. The study of heat kernels for diffusions on manifolds and Markov chains on graphs has a long history, dating back at least to the work of De Giorgi, Nash, Moser in the late 1950s and early 60s, and that of Aronson (Cf. \cite{Ar}), investigating properties of solutions of parabolic differential equations. There is a large body of work on Gaussian and sub-Gaussian heat kernel estimates on diverse spaces, their equivalence to functional inequalities, and related stability theory (see\cite{CGZ,De,Gr,HS,SC,St1,St2} and the references therein). In the setting of graphs, some associated \emph{continuous time}, symmetric rate random walks among uniformly elliptic, time dependent conductances 
have been studied (cf. \cite[Section 4]{DD}, \cite[Appendix B]{GOS} and \cite[Theorem 1.1]{GP}). In particular, it is by now known that the two-sided Gaussian heat kernel estimates hold for any such random walks on $\Z^d$, and more generally on any bounded degree graphs satisfying volume doubling plus a uniform Poincar\'{e} inequality (cf. \cite[Theorem 1.2]{HK} and the references therein). 

All such continuous time, symmetric rate walks, have \emph{time-independent 
reversing measure.} Similarly if the discrete time-dependent
conductance model of \eqref{eq:trans-def} satisfies
a uniform Sobolev inequality, \cite[Section 7]{CGZ} claims 
some of the 
Gaussian heat kernel estimates, \emph{provided the reversing measure
$\pi^{(t)}(x)$ of \eqref{eq:vert-cond-def} is held constant in time,} and the walk is uniformly lazy.
In contrast, the study of recurrence/transience, and more generally, 
that of heat kernel estimates, is rather subtle when 
$t \mapsto \pi^{(t)}(x)$ is not constant. Indeed, some heat kernel
estimates are derived in this setting by \cite{SCZ}, but as shown in 
\cite[Propositions 1.4,1.5]{HK}, if the time varying vertex conductances
are either non-monotone or unbounded, then in general 
neither the upper/lower Gaussian estimates nor recurrence/transience 
properties are stable under perturbations (and the same applies
for constant speed continuous time random walks). 

Random evolving sets have been introduced in \cite{MP1,MP2}, where they 
are applied 
to study the mixing time of possibly non-reversible Markov chains 
(with the related notion of size-biased evolving sets already inherent in \cite{DF}). 
For static weighted graphs it is known that evolving sets serve well in 
deducing from an isoperimetric inequality, both the heat kernel upper bound 
and a Nash inequality. The main tool of this work is the extended notion of 
random evolving sets in the parabolic (time-varying) context 
(see Definition \ref{defn-evol}).

Turning to state our main result, we use hereafter 
$A^c$ for $V\backslash A$ and $\pi^{(t)}(A)=\pi^{(t)}(A,V)$, 
or more generally $\pi^{(t)}(A,B)=\sum_{x\in A, y\in B}\pi^{(t)}(x,y)$ 
for any $A \subset V_t$, $B \subset V$. 
By analogy to convention, we define the {\emph{heat kernel}} of $\{X_t\}$ as
\begin{align}\label{eq:ht-def}
h(s,x;t,y):=\frac{P(s,x;t,y)}{\pi^{(t)}(y)}, \quad x\in V_s,\, y\in V_t\,.
\end{align}

\begin{defn}\label{eff-non-dec}
Starting with $\beta(0)=1$, suppose that  
\begin{align}
\label{eq:eff-non-dec}
\beta(u+1) := \beta(u) \sup_{x\in V_{u}}
\Big\{\frac{\pi^{(u)}(x)}{\pi^{(u+1)}(x)}\big\} \,,
\quad
 u \in \N \,,
\end{align}
are finite. With $t\mapsto\beta(t)\pi^{(t)}(x)$ non-decreasing,
we call vertex conductances $t\mapsto\pi^{(t)}(x)$ 
\emph{effectively non-decreasing}, if $\eta_{\star} = 
\sup_{t>u \ge 0} \{\beta(t)/\beta(u)\} < \infty$  
(clearly, $\eta_{\star} \le 1$ for non-decreasing
$t \mapsto \pi^{(t)}(x)$).
\end{defn}

\begin{thm}\label{thm-main}
Suppose the walk is uniformly lazy, namely 
$\inf_{t,x} P(t,x;t+1,x) \ge \gamma$ for some 
$\gamma\in (0,1/2]$ and $\beta(u)$ of \eqref{eq:eff-non-dec} are
finite. Fixing $d>1$, we consider the isoperimetric growth function
\begin{align}
\psi_{d,\beta} (t) :=  \sum_{u=0}^{t-1} (\beta(u)^{1/d} \kappa_u)^2 \,,\quad
\kappa_{u} &:= \inf_{A \subset V_u, 0<|A|<\infty} \; \Big\{ \frac{\pi^{(u)}(A,A^c)}{\pi^{(u)}(A)^{(d-1)/d}} \Big\} \,,\label{eq:isop2} 
\end{align}
with $\psi_{d}(t)$ in case the factors $\beta(u)^{1/d}$ are omitted.
If for fixed $\lambda \in (0,1/2]$,  
\begin{equation}\label{eq:r-choice}
\exists \, r \in (s,t), \qquad 
\frac{\psi_{d,\beta} (r) -\psi_{d,\beta}(s)}{\psi_{d,\beta}(t)-\psi_{d,\beta}(s)}
 \in [\lambda,1-\lambda] \,,
\end{equation}
then for some $c_{+}=c_{+}(d,\gamma,\lambda)$ finite, 
any $t> s \ge 0$, $x \in V_s$ and $y \in V_t$
\begin{align}\label{unbdd-beta}
h(s,x;t,y) \le c_{+} \beta(t)(\psi_{d,\beta}(t)-\psi_{d,\beta} (s))^{-d/2}\,.
\end{align}
Let $\eta_0:=\sup_x \pi^{(0)}(x)$ (positive). For $t \mapsto \pi^{(t)}(x)$ 
effectively non-decreasing and
uniformly bounded (i.e. $C := \sup_{t,x} \pi^{(t)}(x) < \infty$), 
 we further have that for some
$c_{\star}=c_{\star}(d,\gamma,\eta_0,\eta_{\star},C)$ finite 
and all $s,x,t,y$ as above,
\begin{align}\label{eq:HK-ubd-imp}
{\pi^{(s)}(x)} h(s,x;t,y) \le c_{\star} \big(e+ \psi_d(t)-\psi_d(s) \big)^{- d/2}\,.
\end{align}
\end{thm}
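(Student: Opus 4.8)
The plan is to reduce to a model with genuinely non-decreasing vertex conductances by time-reweighting, run the parabolic evolving-set scheme of Definition~\ref{defn-evol} there, and transfer the resulting Nash-type bound back. Put $\wt\pi^{(u)}(x):=\beta(u)\pi^{(u)}(x)$ and $\wt\pi^{(u)}(x,y):=\beta(u)\pi^{(u)}(x,y)$. The recursion \eqref{eq:eff-non-dec} is built exactly so that $u\mapsto\wt\pi^{(u)}(x)$ is non-decreasing; since each time slice of \eqref{eq:trans-def} is rescaled by a single scalar, the walk $\{X_u\}$ is unchanged and its heat kernel is merely multiplied by $\beta(t)^{-1}$, while the reweighted isoperimetric profile is $\wt\kappa_u=\beta(u)^{1/d}\kappa_u$, so $\psi_{d,\beta}$ is precisely the profile $\psi_d$ of the reweighted model. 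Thus \eqref{unbdd-beta} is equivalent to the claim that the reweighted heat kernel $\wt h(s,x;t,y):=P(s,x;t,y)/\wt\pi^{(t)}(y)$ obeys $\wt h(s,x;t,y)\le c_{+}(\psi_{d,\beta}(t)-\psi_{d,\beta}(s))^{-d/2}$, which I would establish.

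For the reweighted model, run the parabolic evolving set $\{\wt S_u\}_{u\ge s}$ from $\wt S_s=\{x\}$: draw $U_u\sim\dU(0,1]$ and set $\wt S_{u+1}=\{y\in V_{u+1}:\wt\pi^{(u)}(\wt S_u,\{y\})\ge U_u\,\wt\pi^{(u+1)}(y)\}$. Two structural facts, the parabolic analogues of Morris--Peres, need checking. First, because $\wt\pi^{(u)}(\wt S_u,\{y\})\le\wt\pi^{(u)}(y)\le\wt\pi^{(u+1)}(y)$ --- and it is precisely here that the non-decreasing reweighting enters, removing any need for a truncation in the update --- the process $\wt W_u:=\wt\pi^{(u)}(\wt S_u)$ is a martingale. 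Second, $\{\wt S_u\}$ admits a coupling with $\{X_u\}$ under which, conditionally on $\wt S_s,\dots,\wt S_u$, the walk $X_u$ is $\wt\pi^{(u)}$-uniform on $\wt S_u$; this yields the identity $\wt h(s,x;t,y)=\E_{\{x\}}\big[\,\Indb{y\in\wt S_t}/\wt W_t\,\big]$, together with its analogue driven from $y$ in reverse time.

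The bound on $\wt h$ then follows the Morris--Peres scheme. \textbf{(a) Mid-time split.} From $\wt h(s,x;t,y)=\sum_z\wt\pi^{(r)}(z)\,\wt h(s,x;r,z)\,\wt h(r,z;t,y)$ and Cauchy--Schwarz in $\ell^2(\wt\pi^{(r)})$, $\wt h(s,x;t,y)$ is bounded by the geometric mean of two return-type quantities: one for the forward evolving set from $x$ on $[s,r]$, one for the evolving set run in reverse time from $y$ on $[r,t]$. Both are driven by the same symmetric flows $\wt\pi^{(u)}(\cdot,\cdot)$, hence both are controlled by the same profiles $\kappa_u$, and carrying the reweighting through the reversal is what leaves the single prefactor $\beta(t)$ in \eqref{unbdd-beta}. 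Choosing $r$ so that each half carries a fixed fraction of $\psi_{d,\beta}(t)-\psi_{d,\beta}(s)$ balances the two factors, and the existence of such an $r$ is exactly hypothesis \eqref{eq:r-choice}. \textbf{(b) Growth from isoperimetry.} For each return quantity one iterates a one-step growth estimate $1-Z_{\wt S_u}\ge c_\gamma\,\wt\phi_u(\wt S_u)^2$, where $Z_{\wt S_u}=\E\big[\sqrt{\wt W_{u+1}/\wt W_u}\mid\wt S_u\big]$ is the one-step contraction factor of $\sqrt{\wt W_u}$ and $\wt\phi_u(S)=\wt\pi^{(u)}(S,S^c)/\wt\pi^{(u)}(S)$; uniform laziness is what makes this inequality quantitative and accounts for the $\gamma$-dependence of $c_{+}$. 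Since $\wt\phi_u(\wt S_u)\ge(\beta(u)^{1/d}\kappa_u)\,\wt W_u^{-1/d}$ by the definition of $\kappa_u$, the defect accumulated over $[s,r]$ is controlled by $\sum_{u=s}^{r-1}(\beta(u)^{1/d}\kappa_u)^2=\psi_{d,\beta}(r)-\psi_{d,\beta}(s)$, and the standard Nash-type bookkeeping converts this into decay at rate $(\psi_{d,\beta}(r)-\psi_{d,\beta}(s))^{-d/2}$.

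Finally I would deduce \eqref{eq:HK-ubd-imp}. Under ``effectively non-decreasing and uniformly bounded'' one has $\eta_0/C\le\beta(u)\le\max(1,\eta_\star)$ for all $u$ --- the upper bound from $\beta(0)=1$ and $\eta_\star=\sup_{t>u}\beta(t)/\beta(u)$, the lower from $\wt\pi^{(u)}(x)\ge\pi^{(0)}(x)$ with $\pi^{(u)}(x)\le C$ --- so $\psi_{d,\beta}(t)-\psi_{d,\beta}(s)\asymp\psi_d(t)-\psi_d(s)$, the factor $\beta(t)$ is $O(1)$, and $\pi^{(s)}(x)\le C$; hence \eqref{unbdd-beta} gives $\pi^{(s)}(x)h(s,x;t,y)\lesssim(\psi_d(t)-\psi_d(s))^{-d/2}$ whenever \eqref{eq:r-choice} holds. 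Separately, chaining the one-step reversibility $\pi^{(u)}(a)P(u,a;u+1,b)=\pi^{(u)}(b)P(u,b;u+1,a)$ and using $\wt\pi^{(u)}\le\wt\pi^{(u+1)}$ yields the a priori bound $\pi^{(s)}(x)P(s,x;t,y)\le\max(1,\eta_\star)\,\pi^{(t-1)}(y)\le\max(1,\eta_\star)^2\,\pi^{(t)}(y)$, i.e.\ $\pi^{(s)}(x)h(s,x;t,y)\le\max(1,\eta_\star)^2$ always. Testing $\kappa_u$ on a single vertex gives $\kappa_u\le C^{1/d}$, so the increments of $\psi_{d,\beta}$ are uniformly bounded and \eqref{eq:r-choice} (for a fixed $\lambda$, chosen greedily) holds once $\psi_d(t)-\psi_d(s)$ exceeds an absolute constant; in the complementary bounded range the a priori bound, with $(e+\psi_d(t)-\psi_d(s))^{-d/2}$ bounded below there, gives the claim --- which is exactly what the ``$e+$'' absorbs. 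I expect the main obstacle to lie in step (b): re-establishing the martingale property of $\wt W_u$, the Doob-type coupling, and above all the quantitative one-step inequality $1-Z_{\wt S_u}\ge c_\gamma\wt\phi_u(\wt S_u)^2$ with a reference measure that moves at every step; a secondary delicate point is matching the forward and backward reweightings so that the two Cauchy--Schwarz halves recombine into the single clean factor $\beta(t)$.
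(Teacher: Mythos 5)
Your reduction to non-decreasing vertex conductances via the reweighting $\wh\pi^{(u)}=\beta(u)\pi^{(u)}$ is exactly the paper's first move, and your derivation of \eqref{eq:HK-ubd-imp} from \eqref{unbdd-beta} (using $\kappa_u\le C^{1/d}$, $\eta_0/C\le\beta(u)\le\eta_\star$, and a trivial a priori bound in the bounded range) also matches the paper. The forward half of your machinery is likewise sound: the martingale property of $\pi^{(t)}(S_t)$, the identity $\pi^{(0)}(x)h(0,x;t,y)=\bP_{\{x\}}(y\in S_t)$, and the isoperimetric input $R_t\ge\kappa_t\pi^{(t)}(S_t)^{-1/d}$ are Lemmas \ref{heat kernel} and \ref{recursion}.

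The gap is in your step (a), and it is not a technicality. The mid-time Cauchy--Schwarz split requires controlling $\sum_z\wt\pi^{(r)}(z)\,\wt h(r,z;t,y)^2$ by an evolving set ``run in reverse time from $y$.'' But the one-step detailed balance here is $\pi^{(u)}(a)P(u,a;u+1,b)=\pi^{(u)}(b)P(u,b;u+1,a)$, with the \emph{time-$u$} measure on both sides; chaining it over $[r,t]$ does not produce a chain of the same type, because the reversing measure changes at every step, and for the backward chain the (reweighted) vertex conductances are non-\emph{increasing}. For such a chain the update \eqref{update} is no longer a probability, $\pi^{(t)}(S_t)$ is no longer a martingale, and Lemma \ref{recursion} is unavailable. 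This is precisely why the paper restricts the complement trick to time-independent $\pi^{(t)}(x)$ (Remark \ref{rem:comp}) and lists both the use of the time-reversed chain and the non-increasing case as open problems in the introduction. Without the split (or the complement trick), the forward evolving set alone only bounds $\pi^{(0)}(x)h$ by $\pi^{(0)}(x)\wh\bE[\pi^{(t)}(S_t)^{-1}]$, which is the degenerate case $\alpha=0$ of the Nash recursion: the factor $\alpha(1-\alpha)$ in \eqref{eq:recur} vanishes, and the ``standard Nash-type bookkeeping'' of your step (b) yields only the sub-optimal exponent $-(1-\alpha)d/2$ for $\alpha\in(0,1)$ --- the paper makes exactly this point at the start of its proof. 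The actual argument replaces your step (a) by an entirely different mechanism: embed $\pi^{(i)}(S_i)$ as the integer-time samples of a continuous nonnegative martingale $M_u$ (Lemma \ref{embedding}), write $\pi^{(0)}(x)h\le\bP(M_t\ne0)$, decompose over the dyadic scale $E_k=\{e^k\le\sup_uM_u<e^{k+1}\}$ with Doob's inequality supplying the weight $e^{-k}$, show that conditioning on $\{\sup_uM_u\ge e^k\}$ produces the size-biased chain of Definition \ref{defn-DF-evol} (to which $\alpha=1/2$ of Lemma \ref{recursion} applies and gives $\exp(-c\psi_d(r)e^{-2k/d})$ for failing to reach scale $e^k$ by time $r$), and finally run a supermartingale with a deterministic decreasing barrier to show that a set which has reached scale $e^k$ but never $e^{k+1}$ dies by time $t$ except with probability $O(e^{2k}(\psi_d(t)-\psi_d(r))^{-d})$. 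The choice of $r$ in \eqref{eq:r-choice} balances these last two estimates, not two Cauchy--Schwarz halves. As written, your proposal would need to resolve the reverse-time issue to go through, and that is the open problem the paper's architecture is designed to circumvent.
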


\begin{remark}\label{rmk:trans} 
If the \abbr{rhs} of \eqref{eq:HK-ubd-imp} is summable over $t$, 
then $\sum_t P(0,x;t,y)$ is finite for any $x \in V_0$, $y \in V$.
Hence, the process $\{X_t\}$ is then \emph{transient in the strong sense} 
that starting 
at any non-random $X_0 \in V_0$ yields a finite expected number of visits 
to any $y \in V$ (and in particular, w.p.1. the sample path 
$t \mapsto X_t$ visits any $y \in V$ only finitely many times).
\end{remark}

\begin{remark}\label{rmk:conv-fast} 
Assuming $\kappa_u$ are bounded away from zero, even 
for polynomially growing $u \mapsto \beta(u)$ the 
\abbr{rhs} of \eqref{unbdd-beta} yields the optimal
$(t-s)^{-d/2}$ bound. For example, this applies when 
$\sup_{x} |\pi^{(t)}(x)/\pi(x) - 1| \to 0$ at rate 
$t^{-1}$. In contrast, for exponentially growing 
$u \mapsto \beta(u)$ the \abbr{rhs} of \eqref{unbdd-beta}
is $O(1)$, so carries no information. Indeed, the latter happens for 
the recurrent random walk among oscillating $[1-\ep,1+\ep]$-valued 
edge conductances on $\bZ^2\times\bZ_+$ which is given in 
\cite[Proposition 1.5(i)]{HK}.
\end{remark}

\begin{remark}\label{rem:cgz}
For $d > p \ge 1$ the $d$-dimensional 
Sobolev $\ell_p$-inequality holds on $\bG_u$, if 
\begin{align}
\wh{\kappa}_u:=\inf_{|\text{supp}(f)|<\infty}\Big\{\frac{\|\grad f\|_{p,u}}{\|f\|_{pd/(d-p),u}}\Big\}
\end{align}
is positive, with the corresponding functional norms for $q \ge 1$,
\begin{align*}
\|f\|_{q,u}&:=\big(\sum_{x\in V_u}|f(x)|^q\pi^{(u)}(x)\big)^{1/q},\\
\|\grad f\|_{q,u}&:=\big(\frac{1}{2}\sum_{x,y\in V_u}|f(y)-f(x)|^q\pi^{(u)}(x,y)\big)^{1/q}.
\end{align*}
Recall that for $d>1$, the Sobolev $\ell_1$-inequality is equivalent to the isoperimetric inequality of (\ref{eq:isop2}) with $\wh{\kappa}_u=\kappa_u$,
whereas for $d>2$, the Sobolev $\ell_2$-inequality is implied by the isoperimetric inequality (see \cite[Theorem 3.2.7]{Ku}).
\newline 
For uniformly lazy walk 
and \emph{time-independent conductances,} it is shown in \cite{CGZ}
that the Sobolev $\ell_2$-inequality 
with uniformly positive $\wh{\kappa}_u=\kappa$ yields the Gaussian heat kernel 
full upper bound (via the discrete integral maximum principle), and a 
matching on-diagonal lower bound holds under additional volume condition. 
\end{remark}

\begin{remark}\label{rmk:delayed}
In case of \emph{delayed random walk} one
specifies only $\{\pi^{(t)}(x,y), x \ne y\}$.
Then, assuming that for some $\gamma \in (0,1/2]$,
$$
\sup_{t,x} \pi^{(t)}(x,\{x\}^c) \le 1-\gamma \,,
$$ 
one lets $\pi^{(t)}(x,x) := 1 - \pi^{(t)}(x,\{x\}^c)$. It results 
with $\pi^{(t)}(x)=1$ for all $t,x$ and the uniformly lazy 
transition probabilities $P(t,x;t+1,y)=\pi^{(t)}(x,y)$ then
satisfy the heat-kernel upper bound \eqref{eq:HK-ubd-imp}.
\end{remark}

Here is a direct consequence of Theorem \ref{thm-main} 
(thanks to Remark \ref{rmk:trans}).
\begin{cor}\label{cor:u-ellip} 
Suppose $\bG$ of bounded degree satisfies a uniform 
isoperimetric inequality of order $d>2$
(e.g. the lattice $\bG=\bZ^d$), and consider
a uniformly lazy walk $\{X_t\}$ on $\bG$ equipped with 
\emph{uniformly elliptic} and bounded \emph{edge conductances}
(namely, $\pi^{(t)}(x,y)\in[C_1^{-1},C_1]$ for all $t$ and edges 
or self-loops $(x,y)$, with $C_1$ a universal finite constant).
\newline
If $t \mapsto \pi^{(t)}(x)$ are effectively non-decreasing, then
for any law of $X_0$ the expected number of visits by $\{X_t\}$ 
to $y \in V$ is finite (so w.p.1. the sample path 
visits each site finitely many times).
\end{cor}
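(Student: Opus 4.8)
The plan is to obtain Corollary \ref{cor:u-ellip} directly from the on-diagonal estimate \eqref{eq:HK-ubd-imp} of Theorem \ref{thm-main}: the only work is to verify, in the uniformly elliptic setting, that all of the theorem's quantitative inputs are uniform in time, and then to sum the resulting heat-kernel bound over $t$. First I would fix the maximal degree $D<\infty$ of $\bG$ and note that, since every edge conductance and self-loop conductance lies in $[C_1^{-1},C_1]$, summing \eqref{eq:vert-cond-def} over the at most $D+1$ terms incident to $x$ gives $\pi^{(t)}(x)\in[C_1^{-1},(D+1)C_1]$ for all $t,x$. This immediately yields $V_t=V$ for every $t$, a finite $C:=\sup_{t,x}\pi^{(t)}(x)$, a positive $\eta_0=\sup_x\pi^{(0)}(x)$, and $\inf_{t,x}\pi^{(t)}(x)\ge C_1^{-1}$. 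Since $t\mapsto\beta(t)\pi^{(t)}(x)$ is non-decreasing (immediate from \eqref{eq:eff-non-dec}), these two-sided bounds force $\beta(t)\ge\pi^{(0)}(x)/\pi^{(t)}(x)\ge((D+1)C_1^2)^{-1}$, while the hypothesis of effective non-decrease gives $\beta(t)\le\max(1,\eta_\star)<\infty$; thus $\beta(t)$ stays in a fixed compact subinterval of $(0,\infty)$.

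Next I would turn the combinatorial isoperimetry of $\bG$ into a time-uniform control of the profile $\kappa_u$ in \eqref{eq:isop2}. The assumed uniform isoperimetric inequality of order $d$ provides $\kappa_\bG>0$ with $|\partial_E A|\ge\kappa_\bG|A|^{(d-1)/d}$ for every finite nonempty $A\subset V$, $\partial_E A$ being the set of edges joining $A$ to $A^c$. Using uniform ellipticity, $\pi^{(u)}(A,A^c)\ge C_1^{-1}|\partial_E A|$ and $\pi^{(u)}(A)\le(D+1)C_1|A|$, so $\kappa_u\ge\kappa_-:=C_1^{-1}\kappa_\bG((D+1)C_1)^{-(d-1)/d}>0$ for all $u$ (a matching upper bound $\kappa_u\le\kappa_+<\infty$ comes from testing on a single vertex). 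Hence $\psi_d(t)-\psi_d(s)=\sum_{u=s}^{t-1}\kappa_u^2\ge\kappa_-^2(t-s)$, i.e. $\psi_d$ grows at least linearly; together with the bounds on $\beta$ this shows the increments $(\beta(u)^{1/d}\kappa_u)^2$ of $\psi_{d,\beta}$ are pinched between two positive constants, so $\psi_{d,\beta}$ likewise grows linearly with bounded increments. In particular the selection condition \eqref{eq:r-choice} holds (for any fixed $\lambda\in(0,1/2]$) once $t-s$ exceeds a constant, by a discrete intermediate-value argument; for the remaining bounded range of $t-s$ the estimate \eqref{eq:HK-ubd-imp} is immediate, as its left side is at most $\pi^{(s)}(x)/\pi^{(t)}(y)\le(D+1)C_1^2$ while its right side stays bounded below.

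With those checks in hand, Theorem \ref{thm-main} applies: the walk is uniformly lazy by hypothesis, the $\beta(u)$ are finite, and $t\mapsto\pi^{(t)}(x)$ is effectively non-decreasing and uniformly bounded, so \eqref{eq:HK-ubd-imp} holds. Taking $s=0$ (so $\psi_d(0)=0$) and using \eqref{eq:ht-def} with $\pi^{(t)}(y)\le(D+1)C_1$ and $\pi^{(0)}(x)\ge C_1^{-1}$, I would deduce $P(0,x;t,y)=\pi^{(t)}(y)h(0,x;t,y)\le(D+1)C_1^2\,c_\star\,(e+\kappa_-^2 t)^{-d/2}$, uniformly over $x,y\in V$. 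Since $d>2$ the right side is summable in $t$, so $\sum_{t\ge0}P(0,x;t,y)<\infty$; by Remark \ref{rmk:trans} the expected number of visits to $y$ from any fixed vertex is finite, and averaging this uniform bound against the (arbitrary) law of $X_0$, which is necessarily supported on $V_0=V$, yields the claim in general, together with the asserted almost sure finiteness. I do not expect a genuine obstacle here: the substance lies entirely in Theorem \ref{thm-main}, and the only point that needs care is precisely the second paragraph — converting the graph's vertex- and edge-isoperimetry into a weighted, \emph{time-uniform} lower bound on $\kappa_u$, with the attendant routine check that \eqref{eq:r-choice} cannot fail for well-separated times.
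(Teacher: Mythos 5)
Your proposal is correct and follows exactly the route the paper takes: uniform ellipticity plus bounded degree plus the graph's isoperimetric inequality give a time-uniform lower bound $\kappa_u\ge\kappa_->0$, hence linear growth of $\psi_d$, hence $P(s,x;t,y)\le c_\star'(e+t-s)^{-d/2}$ from \eqref{eq:HK-ubd-imp}, which is summable for $d>2$ and yields strong transience via Remark \ref{rmk:trans}, uniformly in the starting point. The only difference is that you spell out details the paper leaves implicit (the two-sided bounds on $\pi^{(t)}(x)$ and $\beta(t)$, and the check of \eqref{eq:r-choice}, which is in fact already handled inside the proof of Theorem \ref{thm-main} for the bound \eqref{eq:HK-ubd-imp}).
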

\noindent
Indeed, in the setting of Corollary \ref{cor:u-ellip} we have
\eqref{eq:isop2} holding with $\kappa_u$ 
at least some universal positive constant 
times the edge-isoperimetic constant for $\bG$, hence uniformly bounded 
away from zero. This yields the linear growth of $\psi_d(\cdot)$ with 
$P(s,x;t,y) \le c_\star (t-s)^{-d/2}$, hence the stated 
strong transience (when $d>2$), uniformly in $X_0$. 

\smallskip
We note in passing that having only $\pi^{(t)}(x)\in[C^{-1},C]$ for all $x \in V$,
is not enough (for example the graph $\bZ^d$ without all edges connecting 
finite box $\bB_r$ to $\bB_{r}^c$ has uniformly bounded vertex 
conductances, but $\kappa_u=0$ in \eqref{eq:isop2} and starting 
at $X_0=0$ any random walk on this graph is confined to $\bB_{r}$, 
hence recurrent). 

\smallskip
The analog of Corollary \ref{cor:u-ellip} applies also
for the continuous time, constant speed random walk, the 
definition of which we provide next.

\begin{defn}\label{defn:csrw}
Suppose $\bG=(V,E)$ is locally finite graph 
equipped with \abbr{rcll} edge conductances 
$t \mapsto \pi^{(t)}(x,y)$ such that $\pi^{(t)}(x)>0$ for all $x$.
The $V$-valued stochastic process $\{Y_t\}$ of \abbr{RCLL} sample path 
$t \mapsto Y_t$ is called  
a constant speed random walk (in short \abbr{CSRW}), if it waits 
i.i.d. $\exp(1)$ times between successive jumps, and if 
$Y_{T^-} = x$ just prior to the current random jump time $T$, then 
the process jumps across each $(x,y) \in E$ with probability 
$\pi^{(T)}(x,y)/\pi^{(T)}(x)$.
\end{defn}

{\begin{defn}\label{rcll-eff-non-dec}
We call \abbr{RCLL} vertex conductances $t\mapsto\pi^{(t)}(x)$  
effectively non-decreasing, if for Lebesgue a.e. $t_k \uparrow \infty$, 
the sequence $k\mapsto\pi^{(t_k)}(x)$ is effectively non-decreasing 
(see Definition \ref{eff-non-dec}).
\end{defn}}

\begin{ppn}\label{csrw}
Suppose graph $\bG=(V,E)$ of bounded degree that satisfies 
a uniform isoperimetric inequality of order $d>2$ (e.g. the lattice $\bG=\bZ^d$), is 
equipped with \emph{uniformly elliptic} and bounded \abbr{RCLL}
\emph{edge conductances}
(namely, $\pi^{(t)}(x,y)\in[C_1^{-1},C_1]$ for all $t \ge 0$ and 
$(x,y) \in E$, with $C_1$ some universal finite constant). 
Assuming further that $t \mapsto \pi^{(t)}(x)$ are effectively non-decreasing, w.p.1. the sample path $t \mapsto Y_t$ of the \abbr{CSRW} returns to 
any $y \in V$ only finitely many times.
\end{ppn}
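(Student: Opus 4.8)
The plan is to reduce Proposition \ref{csrw} to the discrete-time conclusion of Corollary \ref{cor:u-ellip}, applied conditionally on the Poisson clock driving $\{Y_t\}$. First, I would realize the \abbr{CSRW} together with a rate-$2$ Poisson process on $[0,\infty)$ with arrival times $0=S_0<S_1<S_2<\cdots$, so that at each $S_m$, $m\ge1$, given $Y_{S_m^-}=x$ the process stays at $x$ with probability $\tfrac12$ and otherwise jumps across $(x,y)\in E$ with probability $\tfrac12\,\pi^{(S_m)}(x,y)/\pi^{(S_m)}(x)$. By Poisson thinning (retention probability $\tfrac12$), the genuine jump times form a rate-$1$ Poisson process with the correct jump law, so this coupling does realize the process of Definition \ref{defn:csrw}. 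Setting $W_m:=Y_{S_m}$, one has $Y_t=W_m$ for $t\in[S_m,S_{m+1})$, hence $\{Y_t\}$ returns to a given $y\in V$ only finitely many times if and only if $W_m=y$ for only finitely many $m$.

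Next I would condition on the clock $\{S_m\}_{m\ge1}$. Since the edge conductances are deterministic functions of time (independent of the walk), conditionally on $\{S_m\}$ the sequence $\{W_m\}$ is a time-inhomogeneous Markov chain of the form \eqref{eq:trans-def} on $\bG$, with edge weights $\wh\pi^{(m)}(x,y):=\pi^{(S_{m+1})}(x,y)$ for $y\ne x$ and self-loop weights $\wh\pi^{(m)}(x,x):=\pi^{(S_{m+1})}(x)$, hence vertex conductances $\wh\pi^{(m)}(x)=2\pi^{(S_{m+1})}(x)$ and holding probability $P(m,x;m+1,x)=\tfrac12$. Because $\bG$ has bounded degree and $\pi^{(t)}(x,y)\in[C_1^{-1},C_1]$ for all $t$, the weights $\wh\pi^{(m)}(x,y)$, including self-loops, are uniformly elliptic and bounded (with a possibly larger universal constant), and the uniform isoperimetric inequality of order $d>2$ on $\bG$ is inherited. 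Moreover $\{S_{m+1}\}_{m\ge0}$ is almost surely a strictly increasing sequence tending to infinity whose finite-dimensional marginals are absolutely continuous with respect to Lebesgue measure; so by Definition \ref{rcll-eff-non-dec} the sampled vertex conductances $m\mapsto\pi^{(S_{m+1})}(x)$, and therefore $m\mapsto\wh\pi^{(m)}(x)$, are almost surely effectively non-decreasing in the sense of Definition \ref{eff-non-dec}.

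I would then invoke Corollary \ref{cor:u-ellip}: on the full-probability event where the conditioned environment $\{\wh\pi^{(m)}\}$ satisfies its hypotheses, the uniformly lazy ($\gamma=\tfrac12$) chain $\{W_m\}$ returns to each $y\in V$ only finitely many times, for any law of $W_0=Y_0$. Integrating over the clock, this holds almost surely unconditionally, and combined with $Y_t=W_m$ on $[S_m,S_{m+1})$ and the countability of $V$ it yields that almost surely $t\mapsto Y_t$ returns to every $y\in V$ only finitely many times.

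The one point requiring genuine care is the transfer used in the second paragraph: Definition \ref{rcll-eff-non-dec} quantifies over Lebesgue-almost-every increasing sequence $t_k\uparrow\infty$, so one must verify that the random arrival times of a rate-$2$ Poisson process form such a generic sequence — this amounts to the absolute continuity of the coordinatewise law of $(S_1,S_2,\dots)$ with respect to Lebesgue measure together with a routine disintegration/Fubini argument. The remaining ingredients — the thinning identity, ellipticity and boundedness of the self-loop weights, inheritance of the isoperimetric inequality, and the passage from $\{W_m\}$ back to $\{Y_t\}$ — are elementary.
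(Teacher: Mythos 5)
Your construction is exactly the paper's: a rate-$2$ Poisson clock with an independent fair coin at each ring, so that by thinning the genuine jumps occur at rate $1$ with the correct law, and the sampled process is a $\tfrac12$-lazy discrete-time walk in the conductances $\pi^{(T_k)}(x,y)$, which (together with the self-loops you add) inherits uniform ellipticity, boundedness, the isoperimetric inequality, and the effectively non-decreasing property for a.e.\ realization of the clock. You diverge from the paper only in the concluding step: you condition on the clock, invoke Corollary \ref{cor:u-ellip} for the discrete chain, and transfer back via $Y_t=W_m$ on $[S_m,S_{m+1})$; the paper instead stays in continuous time, writes the quenched heat-kernel bound $P^\omega(s,x;t,y)\le c_\ast^\omega\,\phi(N_t-N_s)$ from \eqref{eq:HK-ubd-imp}, and integrates $\int_0^\infty P^\omega(0,x;t,y)\,dt$ using $\int_0^\infty\bP(N_t\le t)\,dt<\infty$ and Fubini, thereby obtaining a.s.\ finite expected \emph{total local time} at each $y$ before deducing finitely many visits. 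Your route is somewhat more economical and delivers exactly the stated conclusion, while the paper's yields the marginally stronger local-time statement. The measure-theoretic point you flag --- that the Poisson arrival times a.s.\ form a ``Lebesgue-generic'' increasing sequence in the sense of Definition \ref{rcll-eff-non-dec} --- is handled with precisely the same level of informality in the paper, so you are on equal footing there. I see no gaps.
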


\medskip
In many non-elliptic settings we get fast enough isoperimetric growth  
for \eqref{eq:HK-ubd-imp} to yield the desired a.s. transience. Even 
when it does not, such result may be obtained by taking advantage of 
a-priori bounds on the support of the relevant evolving set. We next
deal with one such example, which partially resolves the open question 
raised in \cite[Remark 1.12]{DHS}.

\begin{ppn}\label{percol}
Let $\bD_0$ denote the unique infinite cluster of  {
the correlated percolation model of \cite[Theorem 1.2]{PRS} (which includes as special case the
Bernoulli($p$) bond percolation 
at super-critical $p > p_c(\bZ^d)$),  on $\bZ^d$, $d>2$,} conditioned 
to contain the origin. Starting with $X_0$ at the origin, 
the sample path of any uniformly lazy
\abbr{srw} on growing connected sub-graphs $\{\bD_t\}$ of 
the lattice $\bZ^d$ sharing the vertex set $\mathcal{V}(\bD_0)$ of 
$\bD_0$ (with uniformly bounded self-loops, hence vertex, conductances),
is strongly transient in the sense of Remark \ref{rmk:trans}.
\end{ppn}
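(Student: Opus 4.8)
The plan is to deduce the claim from (the proof of) Theorem~\ref{thm-main}, exploiting that the relevant evolving set lives in a growing box, so that only a \emph{localized} isoperimetric inequality for $\bD_0$ is needed. Reduction first: since each $\bD_t\supseteq\bD_0$ is connected on the fixed vertex set $\mathcal V(\bD_0)$, every vertex has $\bD_t$-degree at least one, so the (uniformly lazy, bounded-self-loop) vertex conductances satisfy $\pi^{(t)}(x)\in[1,C]$ uniformly, and $t\mapsto\pi^{(t)}(x)$ is effectively non-decreasing with $\beta(\cdot)$ and $\eta_\star$ bounded by absolute constants (being squeezed between the non-decreasing degree sequence $\deg_{\bD_t}(x)$, which increases to a bounded limit, and a bounded multiple of it; under a more permissive laziness convention one first absorbs the self-loops into the step distribution as in Remark~\ref{rmk:delayed}). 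Hence, by Remark~\ref{rmk:trans}, it suffices to show that for $\P$-a.e.\ percolation environment $\omega$ there is $C(\omega)<\infty$ with $P(0,0;t,y)\le C(\omega)(1+t)^{-d/2}$ for all $t$ and all $y$; as $d>2$ this is summable in $t$ and bounds the expected number of visits to $y$.

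Theorem~\ref{thm-main} does not apply as a black box: $\bD_0$, hence every $\bD_t$, has arbitrarily long dangling ends, so $\kappa_u=0$ in \eqref{eq:isop2} and $\psi_d\equiv 0$, making the bound vacuous. Instead I would re-run the evolving-set estimate from the proof of \eqref{eq:HK-ubd-imp} for the process $(S_n)_{0\le n\le t}$ with $S_0=\{0\}$, feeding in two inputs. \textbf{(a)} Since the walk uses only nearest-neighbour edges of $\bZ^d$ and self-loops, $S_n\subseteq\bB_n$ for all $n$, so an isoperimetric inequality is only ever needed for sets $A\subseteq\mathcal V(\bD_0)\cap\bB_n$. \textbf{(b)} A \emph{renormalized} isoperimetric inequality for $\bD_0$ inside $\bB_n$: by the decoupling/renormalization structure of the correlated percolation model of \cite{PRS} (classical for super-critical Bernoulli percolation), there are constants $c,c_1>0$ such that $\P$-a.s., for all $n$ past a random $N(\omega)$, every $A\subseteq\mathcal V(\bD_0)\cap\bB_n$ with $(\log n)^{c_1}\le|A|\le|\mathcal V(\bD_0)\cap\bB_n|/2$ has at least $c|A|^{(d-1)/d}$ edges of $\bD_0$ leaving it; consequently $\pi^{(t)}(A,A^c)\ge c|A|^{(d-1)/d}$ while $\pi^{(t)}(A)\le C|A|$, i.e.\ the ratio in \eqref{eq:isop2} for $\bD_t$ restricted to such $A$ is at least a positive constant. (For $|A|<(\log n)^{c_1}$ only the trivial bound ``$\ge$ number of connected components of $A$'', from connectedness of $\bD_0$, is retained.) The mechanism behind (b) is the usual one: each coarse cell met by the $\bZ^d$-edge boundary of $A$ carries an edge of $\bD_0$, and the $\bZ^d$ isoperimetric inequality applied to the coarse-grained image of $A$ closes the estimate once $|A|$ exceeds the (poly-logarithmically large) worst-case size of a ``bad'' cell inside $\bB_n$.

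Plugging \textbf{(a)}--\textbf{(b)} into the evolving-set computation exactly as in the proof of \eqref{eq:HK-ubd-imp}: on the event that $\pi^{(n)}(S_n)$ stays above $(\log t)^{c_1}$ throughout $[t/2,t]$, the uniformly positive isoperimetric ratio drives the growth $\pi^{(t)}(S_t)\gtrsim t^{d/2}$ just as for constant $\kappa_u$ on $\bZ^d$; on the complementary event the walk would have to remain confined to a set of size $\le(\log t)^{c_1}$ for order-$t$ many steps, an event of probability decaying faster than any power of $t$ (a crude Nash/return-probability bound on such a small set, using the uniform laziness, suffices), so its contribution is negligible against $t^{-d/2}$. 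Combining, one gets $P(0,0;t,y)\le C(\omega)\,t^{-d/2}$ on a full-measure event, which is summable; Remark~\ref{rmk:trans} then gives the asserted strong transience, and everything is uniform over the choice of $\{\bD_t\}$, which entered only through $\bD_t\supseteq\bD_0$ and $\sup_{t,x}\pi^{(t)}(x)<\infty$. I expect the main obstacle to be step \textbf{(b)} together with the scale-matching in the last step: establishing the renormalized isoperimetric inequality inside $\bB_n$ for the \cite{PRS} model with the correct $(\log n)^{c_1}$ cutoff (which for that model rests on its quasi-independence/sprinkling decoupling rather than on Bernoulli independence), and then verifying that this poly-logarithmic cutoff is small enough, relative to the $t^{-d/2}$ target, that the evolving set's excursions through the thin parts of $\bD_0$ cost at most a sub-polynomial delay.
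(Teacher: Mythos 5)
Your reduction to Remark \ref{rmk:trans} and your localization observation \textbf{(a)} are exactly right and match the paper: since $S_u\subseteq \bD_u\cap[-u,u]^d$, the only isoperimetric input the proof of Theorem \ref{thm-main} ever needs (via \eqref{eq:kappa-enters}) is for subsets of the cluster in a box of radius $O(u)$, so the vanishing of the globally defined $\kappa_u$ caused by long dangling ends is harmless. Where you diverge is step \textbf{(b)}, and this is where the gap lies. The paper does \emph{not} need a constant isoperimetric ratio above a size threshold: \cite[Theorem 1.2]{PRS} is quoted precisely in the form \eqref{perc-isop}, namely $|\partial_{\bD_0^l}A|\ge c\,l^{-\theta(1-1/d)}|A|^{(d-1)/d}$ for \emph{all} $A\subseteq\bD_0^l$ with $|A|\le|\bD_0^l|/2$, for every $\theta>0$ and all $l\ge l_0(\omega,\theta)$. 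Since $\bD_u$ only gains edges, this yields $\kappa_u\ge c'u^{-\theta(1-1/d)}$ for the localized constant, hence $\psi_d(t)\gtrsim t^{1-2\theta(1-1/d)}$, and \eqref{eq:HK-ubd-imp} gives $P(0,0;t,y)\lesssim t^{-d/2+\theta d(1-1/d)}$, summable for $\theta$ small because $d>2$. Theorem \ref{thm-main} is thus applied essentially as a black box; no size cutoff and no second regime ever appear.

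Your alternative (constant ratio for $|A|\ge(\log n)^{c_1}$ plus a separate small-set argument) is not wrong in spirit, but as written it has two real problems. First, you would have to establish the threshold-type isoperimetric inequality for the correlated model of \cite{PRS}, which is not what their Theorem 1.2 states and is genuinely extra work (for decoupling at polynomial rates the correct cutoff may be $n^{\epsilon}$ rather than poly-log). Second, and more seriously, your treatment of the complementary event conflates the walk with the evolving set: the object that must not stay small is $\pi^{(n)}(S_n)$, not the range of $X_n$, and in Steps I--III of the paper's proof the isoperimetric constant is invoked at \emph{every} level $e^k$, $k_0\le k<L$, including levels far below $(\log t)^{c_1}$ (the sums in \eqref{bound-1} and \eqref{bound-2} run over all such $k$). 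So the small-set regime cannot be quarantined into a single event handled by a ``crude Nash/return-probability bound''; one would have to rework the entire level decomposition, which you have not done. The fix is simply to use the PRS inequality in the form the paper does.
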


As mentioned before, our key tool is the evolving set process $\{S_t\}$,
where $S_t$ is the following random finite subset of $V_t$, $t \ge 0$. 
\begin{defn}\label{defn-evol}
Starting with $S_0=\{x\}$ for $x \in V_0$, sequentially 
for $t=0,1,2,\ldots$ we let $U_{t+1}$ denote a 
Uniform(0,1) random variable which is 
independent of $\{S_s,X_s,U_s,  0 \le s \le t\}$, and form
\begin{align*}
S_{t+1}=\{y \in V_{t+1} : \frac{\pi^{(t)}(S_t,y)}{\pi^{(t+1)}(y)}\ge U_{t+1}\}.
\end{align*}
Assuming $t\to\pi^{(t)}(x)$ are non-decreasing, it 
follows that $V_t \subseteq V_{t+1}$ and for every $y\in V_{t+1}$
\begin{align}
\mathbb{P}(y\in S_{t+1}|S_t)=\frac{\pi^{(t)}(S_t,y)}{\pi^{(t+1)}(y)} \label{update}
\end{align}
(the \abbr{RHS} of \eqref{update} is well defined $[0,1]$-valued 
and any $y$ accessible from $S_t$ must be in $V_{t+1}$).
\end{defn}

\begin{remark}\label{rem:comp}
For uniformly lazy random walk having $\pi^{(t)}(x)$ independent of $t$ 
(so w.l.o.g. $V_t=V$ for all $t$), 
one has the analogue of \cite[Lemma 8]{MP2}. That is, if $(S_t)$ is 
an evolving set process, then the sequence $(S_t^c)$ is also an evolving set 
process of the same transition probability. The proof in \cite[pg 253]{MP2} 
can be reproduced using $\pi^{(t+1)}(x)=\pi^{(t)}(x)$ for all $t,x$, and 
noting that for Uniform$(0,1)$ random variable $U \stackrel{(d)}{=} 1-U$.
\end{remark}

\medskip
We further utilize the concept of conditioned (or size-biased) evolving set, 
upon adapting it to our \emph{parabolic} time-dependent setting.
In particular, it yields the following extension of 
\cite[Theorem 17.23]{LPW} originally due to \cite{DF}.
\begin{defn}\label{defn-DF-evol}
We say that $(S_t \subseteq V_t)$ is the conditioned evolving set, starting 
at $S_0=\{x\}$, if it has the transition kernel
\begin{align}\label{eq:whK-def}
\widehat{K}(t,A;t+1,B)=\frac{\pi^{(t+1)}(B)}{\pi^{(t)}(A)}K(t,A;t+1,B),
\end{align}
where $K(\cdot;\cdot)$ is the transition kernel of 
the unconditioned evolving set of Definition \ref{defn-evol}. 
\end{defn}

\begin{ppn}\label{prop-df}
Suppose $t\mapsto\pi^{(t)}(x)$ are non-decreasing and $(X_t,S_t)$ starting from $(X_0,S_0)=(x,\{x\})$ follows the  
time-varying Markov transition kernel $P^*$ on $V \times 2^{V}$, 
given for $x \in A \cap V_t$, $\pi^{(t)}(x,y) >0$, by
\begin{align*}
P^*(t,(x,A);t+1,(y,B))&=P(t,x;t+1,y)\mathbb{P}(S_{t+1}=B|y\in S_{t+1},S_t=A)\mathbb{I}_{\{y\in B\}}\\
&=\frac{P(t,x;t+1,y)K(t,A;t+1,B)\pi^{(t+1)}(y)\mathbb{I}_{\{y\in B\}}}{\pi^{(t)}(A,y)}.
\end{align*}
\begin{itemize}
\item[(a)]
The marginal process $t \mapsto X_t$ is a time in-homogeneous Markov 
process having the transition kernel $P$, and the marginal process
$t \mapsto S_t$ is another time in-homogenous Markov chain whose 
transition kernel is $\widehat{K}(\cdot,\cdot)$ of \eqref{eq:whK-def}.
\item[(b)] For any $t$, $x \in V_0$ and $w\in S_t$,  
\begin{align*}
\mathbb{P}^*_{x,\{x\}}(X_t=w|S_0,...,S_t)=\frac{\pi^{(t)}(w)}{\pi^{(t)}(S_t)}\,.
\end{align*}
\end{itemize}
\end{ppn}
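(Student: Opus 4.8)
The plan is to first verify that $P^*$ is a legitimate time-varying transition kernel whose two displayed expressions agree, then read off the $X$-marginal almost for free, and finally prove part (b) by induction on $t$, extracting the $S$-marginal assertion of (a) as a byproduct of that induction.

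I would begin with well-definedness. For $x\in A\cap V_t$ with $\pi^{(t)}(x,y)>0$ one has $\pi^{(t)}(A,y)\ge\pi^{(t)}(x,y)>0$, so $y$ is accessible from $A$, the probability $\mathbb{P}(y\in S_{t+1}\mid S_t=A)=\pi^{(t)}(A,y)/\pi^{(t+1)}(y)$ lies in $(0,1]$, and dividing $\mathbb{P}(S_{t+1}=B,\,y\in S_{t+1}\mid S_t=A)=K(t,A;t+1,B)\mathbb{I}_{\{y\in B\}}$ by it gives exactly the conditional law appearing in the first form of $P^*$; multiplying back through recovers the second form (on terms with $P(t,x;t+1,y)=0$ both sides vanish, so the kernel is defined everywhere). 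Since $\mathbb{P}(S_{t+1}=\cdot\mid y\in S_{t+1},S_t=A)$ is a probability measure and $\sum_y P(t,x;t+1,y)=1$, summing the first form over $(y,B)$ gives $1$; summing it over $B$ alone leaves $P(t,x;t+1,y)$, which does not depend on $A$. Because the factor $\mathbb{I}_{\{y\in B\}}$ forces $X_{t+1}\in S_{t+1}$ whenever $X_t\in S_t$, and $(X_0,S_0)=(x,\{x\})$, we get $X_t\in S_t$ (hence $S_t\subseteq V_t$ and $\pi^{(t)}(S_t)\ge\pi^{(t)}(X_t)>0$) for every $t$, so all denominators met below are strictly positive. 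The $A$-free form of $\sum_B P^*$, together with the joint Markov property of $(X_t,S_t)$ and the tower rule, yields the $X$-marginal claim of (a) with kernel $P$.

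For (b) I would induct on $t$, the base case being immediate since $S_0=\{X_0\}=\{x\}$. Assuming $\mathbb{P}^*_{x,\{x\}}(X_t=w\mid S_0,\dots,S_t)=\pi^{(t)}(w)/\pi^{(t)}(S_t)$, I would condition first on the larger field $\sigma(X_0,\dots,X_t,S_0,\dots,S_t)$, where the joint Markov property applies, and only then pass to $\sigma(S_0,\dots,S_t)$, averaging over $X_t=w\in S_t$ via the inductive hypothesis, so as to compute $\mathbb{P}^*(X_{t+1}=w',S_{t+1}=B\mid S_0,\dots,S_t)$. The two cancellations $\tfrac{\pi^{(t)}(w)}{\pi^{(t)}(S_t)}P(t,w;t+1,w')=\tfrac{\pi^{(t)}(w,w')}{\pi^{(t)}(S_t)}$ and $\sum_{w\in S_t}\pi^{(t)}(w,w')=\pi^{(t)}(S_t,w')$ then collapse the expression to $K(t,S_t;t+1,B)\,\pi^{(t+1)}(w')\,\mathbb{I}_{\{w'\in B\}}/\pi^{(t)}(S_t)$. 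Summing over $w'$ gives $\mathbb{P}^*(S_{t+1}=B\mid S_0,\dots,S_t)=\widehat{K}(t,S_t;t+1,B)$, a function of $S_t$ alone; this simultaneously shows $t\mapsto S_t$ is Markov with kernel $\widehat{K}$ (completing (a)) and, upon dividing the joint probability by it and evaluating at $B=S_{t+1}$, gives $\mathbb{P}^*(X_{t+1}=w'\mid S_0,\dots,S_{t+1})=\pi^{(t+1)}(w')\mathbb{I}_{\{w'\in S_{t+1}\}}/\pi^{(t+1)}(S_{t+1})$, which closes the induction.

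The step where care is essential — and which I regard as the only real obstacle — is precisely this conditioning bookkeeping: $t\mapsto S_t$ is \emph{not} Markov with respect to the joint filtration, since the one-step law of $S_{t+1}$ genuinely involves $X_t$, and it is only after averaging out $X_t$ by means of identity (b) that this dependence disappears and $\widehat{K}$ emerges. One must therefore be scrupulous about conditioning on $\sigma(X_0,\dots,X_t,S_0,\dots,S_t)$ first, applying the joint Markov property there, and invoking (b) only at the moment of passing to $\sigma(S_0,\dots,S_t)$. The remaining content is the elementary, if bookkeeping-heavy, summations above, plus noting that $\pi^{(t)}(S_t)$, $\pi^{(t)}(S_t,w')$, $\pi^{(t+1)}(S_{t+1})$ and $K(t,S_t;t+1,S_{t+1})$ are all strictly positive on the events where they appear.
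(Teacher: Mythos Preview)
Your proposal is correct and follows essentially the same inductive scheme as the paper: use (b) at time $t$ to compute $\mathbb{P}^*(X_{t+1}=w',S_{t+1}=B\mid S_0,\dots,S_t)$ via the cancellation $\sum_{w\in S_t}\pi^{(t)}(w)P(t,w;t+1,w')=\pi^{(t)}(S_t,w')$, then sum over $w'$ to get $\widehat{K}$ and divide to recover (b) at $t+1$. Your treatment is in fact slightly more complete, since you verify the $X$-marginal claim of (a) directly (by summing over $B$) and are explicit about the filtration subtlety, whereas the paper's proof leaves the $X$-marginal implicit.
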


We next list a few open problems.
\begin{pbm} For time-independent conductances
\cite{CGZ} relies,
in the setting of Remark \ref{rem:cgz},
on using the time-reversed chain.
\newline
(a). Can this idea be extended to monotone and 
genuinely time varying path of 
reversing measures $t \mapsto \{\pi^{(t)}(x), x \in V\}$?
\newline
(b). Alternatively, does the bound \eqref{eq:HK-ubd-imp} hold 
for uniformly elliptic, uniformly lazy and bounded
edge conductances for which $t \mapsto \pi^{(t)}(x)$ are strictly 
monotone decreasing in $t$?
\newline
(c). Is it possible to establish for monotone increasing 
reversing measures a Gaussian type off-diagonal upper bound and 
somewhat comparable lower bounds?
\end{pbm}

\begin{pbm} Extend Proposition \ref{percol} to allow adding new vertices as $\bD_t$ evolves. 
\newline
(a). For example, start with $\bD_0$ the 
unique infinite cluster of super-critical Bernoulli bond percolation on $\bZ^d$, $d>2$
and end with the full lattice $\bD_\infty=\bZ^d$.
\newline
(b). Alternatively, consider finite graphs $\{\bD_t\}$ that grow to a 
transient infinite graph $\bD_\infty$ of uniformly bounded degrees. 
Slow growth can yield recurrence of the walk, with a sharp phase transition 
from recurrence to transience 
in terms of the growth rate 
predicted for $\bD_\infty=\bZ^d$, $d>2$ (see \cite[Theorem 1.4, Conjecture 1.2]{DHS}). 
Extend the scope of evolving sets to resolve this prediction.
\end{pbm}

Section \ref{sec-pf-main} is devoted to 
the proof of Theorem \ref{thm-main} 
which partly builds on \cite{MP2} 
(and at places also on \cite[Ch. 3]{Ku}), while Propositions 
\ref{csrw}, \ref{percol} and \ref{prop-df} 
are proved in Section \ref{sec-percol}. 
\end{section}

\begin{section}{Proof of Theorem \ref{thm-main}}\label{sec-pf-main}

We start with two key facts about the evolving set 
process of Definition \ref{defn-evol}, in case
$t\mapsto\pi^{(t)}(x)$ are non-decreasing.
\begin{lem} \label{heat kernel}
The sequence $\{\pi^{(t)}(S_t)\}$ is a martingale and 
for any $t \ge 0$, $x \in V_0$ and $y \in V$
\begin{align}\label{eq:trans-prob}
P(0,x;t,y)=\frac{\pi^{(t)}(y)}{\pi^{(0)}(x)}\mathbb{P}_{\{x\}}(y\in S_t).
\end{align}
\end{lem}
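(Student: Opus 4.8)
The plan is to establish the two assertions in sequence, deriving the martingale property first and then bootstrapping it, together with the defining update rule \eqref{update}, to the transition-probability identity \eqref{eq:trans-prob}. For the martingale claim, I would compute $\mathbb{E}[\pi^{(t+1)}(S_{t+1}) \mid S_t]$ directly. Writing $\pi^{(t+1)}(S_{t+1}) = \sum_{y \in V_{t+1}} \pi^{(t+1)}(y)\, \mathbb{I}_{\{y \in S_{t+1}\}}$ and taking conditional expectation inside the (finite, since $S_t$ is finite and the graph locally finite, so only finitely many $y$ are accessible) sum, the update formula \eqref{update} gives
\[
\mathbb{E}\big[\pi^{(t+1)}(S_{t+1}) \mid S_t\big]
= \sum_{y \in V_{t+1}} \pi^{(t+1)}(y)\,\frac{\pi^{(t)}(S_t,y)}{\pi^{(t+1)}(y)}
= \sum_{y \in V_{t+1}} \pi^{(t)}(S_t,y) = \pi^{(t)}(S_t),
\]
the last step because every $y$ with $\pi^{(t)}(S_t,y)>0$ lies in $V_{t+1}$ and $\sum_y \pi^{(t)}(x,y) = \pi^{(t)}(x)$ by \eqref{eq:vert-cond-def}, summed over $x \in S_t$. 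Since $\pi^{(t)}(S_t) \le \pi^{(t)}(V_t) $ need not be finite a priori — but $S_t$ is a.s. finite and $\pi^{(t)}(x)$ is finite at each vertex — integrability holds, so $\{\pi^{(t)}(S_t)\}$ is a martingale with respect to the filtration generated by $\{S_s, X_s, U_s : s \le t\}$.

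For \eqref{eq:trans-prob}, the natural approach is induction on $t$, or equivalently to show that both sides, as functions of $y$, evolve under the same recursion. Fix $x \in V_0$ and set $q_t(y) := \mathbb{P}_{\{x\}}(y \in S_t)$. Using $q_t(y) = \mathbb{E}[\mathbb{I}_{\{y \in S_t\}}]$, conditioning on $S_{t-1}$, and \eqref{update},
\[
q_t(y) = \mathbb{E}\Big[\frac{\pi^{(t-1)}(S_{t-1},y)}{\pi^{(t)}(y)}\Big]
= \frac{1}{\pi^{(t)}(y)} \sum_{z \in V_{t-1}} \pi^{(t-1)}(z,y)\, q_{t-1}(z).
\]
On the other hand, the Chapman–Kolmogorov relation for the chain $\{X_t\}$ together with \eqref{eq:trans-def} gives $P(0,x;t,y) = \sum_z P(0,x;t-1,z) P(t-1,z;t,y) = \sum_z P(0,x;t-1,z)\, \pi^{(t-1)}(z,y)/\pi^{(t-1)}(z)$. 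Substituting the inductive hypothesis $P(0,x;t-1,z) = \frac{\pi^{(t-1)}(z)}{\pi^{(0)}(x)} q_{t-1}(z)$ makes the $\pi^{(t-1)}(z)$ cancel, yielding exactly $\pi^{(t)}(y) q_t(y) / \pi^{(0)}(x)$, which closes the induction; the base case $t=0$ is immediate since $S_0 = \{x\}$ and $P(0,x;0,y) = \mathbb{I}_{\{y=x\}}$.

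The only genuinely delicate point is bookkeeping about supports and finiteness: one must check that the sums over $V_{t+1}$ (resp. $V_{t-1}$) are effectively finite — $S_t$ is finite a.s., the graph is locally finite, so at each step only finitely many new vertices become accessible — and that $\pi^{(t)}(S_t)$ is integrable, which follows because $S_t$ lies in a finite (random) accessible region and each $\pi^{(t)}(x) < \infty$. The non-decreasing hypothesis on $t \mapsto \pi^{(t)}(x)$ is used precisely to guarantee $V_t \subseteq V_{t+1}$, so that the conditional probability in \eqref{update} is well defined for all accessible $y$, and no vertex "falls out" of the state space between consecutive times; I would flag this at the outset. Everything else is routine manipulation of the definitions.
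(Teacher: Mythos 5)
Your proposal is correct and follows essentially the same route as the paper: the martingale property via the update rule \eqref{update} applied to $\sum_{y}\pi^{(t+1)}(y)\mathbb{I}_{\{y\in S_{t+1}\}}$, and the identity \eqref{eq:trans-prob} by induction on $t$ using Chapman--Kolmogorov, the inductive hypothesis, and \eqref{update}, with the cancellation of $\pi^{(t-1)}(z)$ exactly as in the paper's displayed computation. The finiteness/support bookkeeping you flag is handled in the paper implicitly by local finiteness of $\bG$ and finiteness of $S_t$, so nothing is missing.
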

\begin{proof} Fixing hereafter the starting state $S_0=\{x\}$ in $V_0$, 
we have from \eqref{update} that,
\begin{align*}
&\mathbb{E} (\pi^{(t+1)}(S_{t+1})|S_t) =\mathbb{E}\big[
\sum_{z \in V_{t+1}} \mathbb{I}_{\{z\in S_{t+1}\}}\pi^{(t+1)}(z)|S_t\big]\\
&=\sum_{z \in V_{t+1}} \mathbb{P}(z\in S_{t+1}|S_t)\pi^{(t+1)}(z)=
\sum_{z \in V_{t+1}} 
\frac{\pi^{(t)}(S_t,z)}{\pi^{(t+1)}(z)}\pi^{(t+1)}(z)=\pi^{(t)}(S_t)\,.
\end{align*}
That is, $\{\pi^{(t)}(S_t)\}$ is a martingale. 

Turning to confirm the identity \eqref{eq:trans-prob}, note 
first that when $t=0$, both sides of it equal $\mathbb{I}_{\{y=x\}}$. 
Next, if this identity holds for $t$, then using Chapman-Kolmogorov, our 
induction hypothesis, the formula for $P(t,z;t+1,y)$ and \eqref{update}, 
we find that
\begin{align*}
P(0,x&;t+1,y)=\sum_{z \in V_{t}}P(0,x;t,z)P(t,z;t+1,y)\\
&=\sum_{z \in V_t} \frac{\pi^{(t)}(z)}{\pi^{(0)}(x)}\mathbb{P}_{\{x\}}(z\in S_t)P(t,z;t+1,y)\\
&=\frac{1}{\pi^{(0)}(x)}\mathbb{E}_{\{x\}}\Big[ \sum_{z \in S_t} 
\pi^{(t)}(z)P(t,z;t+1,y)\Big] 
=\frac{1}{\pi^{(0)}(x)}\mathbb{E}_{\{x\}}\big[\pi^{(t)}(S_t,y)\big]\\
&=\frac{1}{\pi^{(0)}(x)}
\mathbb{E}_{\{x\}}\Big[\pi^{(t+1)}(y)\mathbb{P}(y\in S_{t+1}|S_t)\Big]
=\frac{\pi^{(t+1)}(y)}{\pi^{(0)}(x)} \mathbb{P}_{\{x\}}(y\in S_{t+1})\,.
\end{align*}
Thus, by induction \eqref{eq:trans-prob} holds for all $t$.
\end{proof}

The next result is essential to our proof 
and the only place 
where we utilize the assumed isoperimetric inequality \eqref{eq:isop2}.
\begin{lem}\label{recursion}
For some $\wt{c}=\wt{c}(\gamma)$ positive, $\beta:=\alpha-2/d$,
any $\alpha\in(0,1)$, $t \ge 0$ and $x \in V_0$, 
\begin{align}\label{eq:recur}
\mathbb{E}_{\{x\}}\Big[\pi^{(t+1)}(S_{t+1})^\alpha-\pi^{(t)}(S_t)^\alpha|S_t
\Big]\le -\wt{c} \alpha (1-\alpha) \kappa_t^2 \pi^{(t)}(S_t)^{\beta
}\, \mathbb{I}_{\{\pi^{(t)}(S_t)>0\}}.
\end{align}
Further, for $\alpha>1$ we have the converse bound 
\begin{align}\label{eq:recur-2}
\mathbb{E}_{\{x\}} \Big[\pi^{(t+1)}(S_{t+1})^\alpha-\pi^{(t)}(S_t)^\alpha|S_t
\Big]\ge \wt{c} \alpha (\alpha-1) \kappa_t^2 \pi^{(t)}(S_t)^{\beta
}\, \mathbb{I}_{\{\pi^{(t)}(S_t)>0\}}.
\end{align}
\end{lem}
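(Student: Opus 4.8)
### Proof plan for Lemma \ref{recursion}

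The plan is to estimate the one-step change of $\pi^{(t)}(S_t)^\alpha$ by conditioning on $S_t$ and using the martingale property $\mathbb{E}[\pi^{(t+1)}(S_{t+1})\mid S_t]=\pi^{(t)}(S_t)$ from Lemma \ref{heat kernel}, together with a second-order Taylor-type expansion of $z\mapsto z^\alpha$. Writing $Z:=\pi^{(t+1)}(S_{t+1})$ and $m:=\pi^{(t)}(S_t)>0$, strict concavity of $z\mapsto z^\alpha$ for $\alpha\in(0,1)$ (resp.\ convexity for $\alpha>1$) gives a pointwise bound of the form $Z^\alpha \le m^\alpha + \alpha m^{\alpha-1}(Z-m) - c\,\alpha(1-\alpha)\,(Z-m)^2\,g(Z,m)$ on the relevant range, where $g$ is comparable to $\max(Z,m)^{\alpha-2}$; taking conditional expectation kills the linear term and reduces matters to lower-bounding $\mathbb{E}[(Z-m)^2 \max(Z,m)^{\alpha-2}\mid S_t]$. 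The cleanest route, following \cite{MP2} and \cite[Ch.~3]{Ku}, is to first reduce to the $\alpha$-independent ``$L^1$ flow'' quantity: show there is a constant depending only on $\gamma$ so that this conditional second moment is at least $\widetilde c\,\kappa_t^2\, m^{\,2(d-1)/d}\cdot m^{\alpha-2}=\widetilde c\,\kappa_t^2 m^{\beta}$, which is exactly the claimed right-hand side up to the $\alpha(1-\alpha)$ factor already extracted from the Taylor bound.

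The heart of the matter — and the only place the isoperimetric hypothesis \eqref{eq:isop2} enters — is the lower bound on the conditional fluctuation of $\pi^{(t+1)}(S_{t+1})$ around its mean $m$. First I would record that, by \eqref{update}, $\mathbb{E}[\pi^{(t+1)}(S_{t+1})\mid S_t]=\pi^{(t)}(S_t,V)=\pi^{(t)}(S_t)$ and, more usefully, that for the single uniform variable $U_{t+1}$ driving the step one has the explicit coupling: $S_{t+1}=\{y\in V_{t+1}:\pi^{(t)}(S_t,y)/\pi^{(t+1)}(y)\ge U_{t+1}\}$, so $\pi^{(t+1)}(S_{t+1})=\sum_{y}\pi^{(t+1)}(y)\mathbb I_{\{Q(y)\ge U_{t+1}\}}$ with $Q(y):=\pi^{(t)}(S_t,y)/\pi^{(t+1)}(y)\in[0,1]$. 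Thus $\pi^{(t+1)}(S_{t+1})$ is a monotone (non-increasing) function of the single scalar $U_{t+1}$, and I would bound its variance below by comparing the values at $U_{t+1}$ small versus near $1$: when $U_{t+1}$ is close to $1$ only vertices $y$ with $Q(y)\approx 1$ survive, i.e.\ essentially the ``core'' of $S_t$, while when $U_{t+1}$ is small the boundary vertices are included as well, and the $\pi^{(t)}$-mass of the gap between these two regimes is controlled from below by the edge-boundary $\pi^{(t)}(S_t,S_t^c)$ — this is where laziness ($\inf_{t,x}P(t,x;t+1,x)\ge\gamma$) is used to ensure a definite fraction of the relevant $Q(y)$ lies bounded away from both $0$ and $1$, giving a spread of order $\pi^{(t)}(S_t,S_t^c)$ on an event of $U_{t+1}$-probability bounded below by a constant times $\gamma$. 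Combined with \eqref{eq:isop2}, $\pi^{(t)}(S_t,S_t^c)\ge \kappa_t\,\pi^{(t)}(S_t)^{(d-1)/d}=\kappa_t\,m^{(d-1)/d}$, so the conditional standard deviation of $\pi^{(t+1)}(S_{t+1})$ is at least a $\gamma$-dependent constant times $\kappa_t\,m^{(d-1)/d}$, hence the second moment of $Z-m$ is $\gtrsim_\gamma \kappa_t^2 m^{2(d-1)/d}$.

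To convert this into the stated bound I would feed it into the Taylor estimate: on the event $Z\le 2m$ the weight $\max(Z,m)^{\alpha-2}$ is comparable to $m^{\alpha-2}$, and one checks that the contribution of the event $Z>2m$ is negligible (or handled by the two-sided concavity bound directly, since $(Z-m)^2\max(Z,m)^{\alpha-2}$ is anyway the right convexity remainder), so $\mathbb{E}[Z^\alpha\mid S_t]-m^\alpha \le -\widetilde c\,\alpha(1-\alpha)\,\kappa_t^2\,m^{\alpha-2+2(d-1)/d}=-\widetilde c\,\alpha(1-\alpha)\,\kappa_t^2\,m^{\beta}$ with $\beta=\alpha-2/d$, and the indicator $\mathbb I_{\{m>0\}}$ accounts for the degenerate case $S_t=\emptyset$ (where the increment is $0$). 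For $\alpha>1$ the identical fluctuation lower bound combined with convexity of $z\mapsto z^\alpha$ (Jensen in the reverse direction, plus the quantitative strong-convexity remainder) gives \eqref{eq:recur-2} with the sign reversed. The main obstacle is the fluctuation lower bound itself — making precise, uniformly in the (arbitrary) set $S_t$, that a constant fraction of the $U_{t+1}$-mass produces a change in $\pi^{(t+1)}(S_{t+1})$ of order $\pi^{(t)}(S_t,S_t^c)$; this is exactly the step where the argument of \cite{MP2} for static graphs must be checked to survive the replacement of $\pi^{(t)}$ by $\pi^{(t+1)}$ in the denominators, using the non-decreasing hypothesis $\pi^{(t)}(y)\le\pi^{(t+1)}(y)$ to keep all the $Q(y)$ in $[0,1]$ and the ratios under control.
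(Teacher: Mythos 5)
Your plan rests on exactly the same three pillars as the paper's proof — the martingale identity from Lemma \ref{heat kernel}, laziness forcing $\pi^{(t)}(S_t,y)\ge\frac{\gamma}{1-\gamma}\pi^{(t)}(S_t^c,y)$ so that the conditional mean of $\pi^{(t+1)}(S_{t+1})$ shifts by order $\pi^{(t)}(S_t,S_t^c)$ depending on whether $U_{t+1}$ is small or large, and the isoperimetric bound $\pi^{(t)}(S_t,S_t^c)\ge\kappa_t\,\pi^{(t)}(S_t)^{(d-1)/d}$ — so the mechanism is identical. Where you diverge is in how the concavity deficit is harvested. The paper conditions on the two halves $\{U_{t+1}\le 1/2\}$ and $\{U_{t+1}>1/2\}$, applies Jensen \emph{within each half}, and is thereby reduced to the elementary one-variable function $f_\alpha(y)=\frac12\big[(1+y)^\alpha+(1-y)^\alpha\big]$ on $[0,1]$ (note $\Delta_t\in[1,2]$ automatically, since $\pi^{(t+1)}(y)\wedge 2\pi^{(t)}(S_t,y)\le 2\pi^{(t)}(S_t,y)$), whose second-derivative bounds give both \eqref{eq:recur} and \eqref{eq:recur-2} at once. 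You instead use a pointwise Taylor remainder plus a conditional variance lower bound, and the one genuinely soft spot in your write-up is the weight $\max(Z,m)^{\alpha-2}$: your dismissal of the event $\{Z>2m\}$ as ``negligible'' is not justified as stated, because when $S_t$ has a large boundary the value of $Z=\pi^{(t+1)}(S_{t+1})$ for small $U_{t+1}$ can greatly exceed $m$, and a priori the variance could be carried there, where the weight is small. The fix is precisely the paper's device: on $\{U_{t+1}>1/2\}$ one has $Q(y)>1/2$ for every surviving $y$, hence $Z\le 2m$ deterministically, while the conditional mean of $Z$ on that half is $(2-\Delta_t)m\le m-\frac{2\gamma}{1-\gamma}\pi^{(t)}(S_t,S_t^c)$; restricting your second-moment bound to this half gives simultaneously the spread and the weight control, and your argument then closes. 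In short: correct skeleton and correct identification of the crux, but the quantitative step you flag as ``the main obstacle'' is most cleanly resolved by the half-conditioning-plus-Jensen reduction to $f_\alpha$ rather than by a raw variance estimate; also note that for \eqref{eq:recur-2} with $1<\alpha<2$ the convexity remainder again involves $\max(Z,m)^{\alpha-2}$ and needs the same localization (the paper only ever invokes \eqref{eq:recur-2} at $\alpha=2$, where the expansion is exact).
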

\begin{proof} Note that $\pi^{(t)}(S_t) = 0$ iff $S_t = \emptyset$, in which 
case by Definition \ref{defn-evol} also $S_{t+1} = \emptyset$ and 
our claim trivially holds. Assuming hereafter that $\pi^{(t)}(S_t)>0$, 
since $U_{t+1}$ is independent of $S_t$
we deduce from (\ref{update}) 
that for every $y \in V_{t+1}$
\begin{align}\notag
p_\star(y,t) : &=
\mathbb{P}\big(y\in S_{t+1} \big| U_{t+1}\le1/2, S_t\big) \\
&=\mathbb{P}\Big(
U_{t+1}\le \frac{\pi^{(t)}(S_t, y)}{\pi^{(t+1)}(y)}\Big| U_{t+1}\le1/2, S_t\Big) 
= 1\wedge \frac{2\pi^{(t)}(S_t, y)}{\pi^{(t+1)}(y)}\,.
\label{eq:pstar-def}
\end{align}
Next, let 
\begin{align}
\Delta_t &:= \frac{1}{\pi^{(t)}(S_t)} 
\sum_{y \in V_{t+1}} \pi^{(t+1)}(y) p_\star(y,t) \nonumber \\
&=
\frac{1}{\pi^{(t)}(S_t)} 
\sum_{y \in V_{t+1}}\big[\pi^{(t+1)}(y)\wedge 2\pi^{(t)}(S_t,y)\big]
\,.
\label{eq:del-dfn}
\end{align}
By assumption, our lazy random walk is such that 
$\pi^{(t)}(y,y) \ge \gamma \pi^{(t)}(y)$ for some $\gamma \in (0,1/2)$. 
Consequently, for any $y\in S_t$, 
\beq\label{eq:basic-bd}
\pi^{(t)}
(S_t,y) \ge \pi^{(t)}(y,y) \ge 
\gamma\pi^{(t)}(y)
\ge\frac{\gamma}{1-\gamma}\pi^{(t)}(S^c_t,y) \,.
\eeq
Now, since $t \mapsto \pi^{(t)}(y)$ is non-decreasing, it follows from 
\eqref{eq:pstar-def} and \eqref{eq:basic-bd} that for $y \in S_t$,
\begin{align*}
&\pi^{(t+1)}(y) p_\star(y,t) 
=\pi^{(t+1)}(y)\wedge2\pi^{(t)}(S_t,y) \ge\pi^{(t)}(y)\wedge2\pi^{(t)}(S_t,y)\\
&=\pi^{(t)}(S_t,y)+\pi^{(t)}(S_t^c,y)\wedge\pi^{(t)}(S_t,y)
\ge \pi^{(t)}(S_t,y)+\frac{\gamma}{1-\gamma}\pi^{(t)}(S_t^c,y)\,.
\end{align*}
Likewise, for $y\in S_t^c$,
\begin{align*}
\pi^{(t+1)}(y) p_\star(y,t) 
\ge\pi^{(t)}(S_t,y)+\frac{\gamma}{1-\gamma}\pi^{(t)}(S_t,y)\,.
\end{align*}
Letting 
$$
R_t:=\frac{\pi^{(t)}(S_t, S_t^c)}{\pi^{(t)}(S_t)} \,,\quad 
\quad 
\Gamma_t:=\frac{\pi^{(t+1)}(S_{t+1})}{\pi^{(t)}(S_t)}
$$ 
we find upon combining the preceding inequalities 
with the definition \eqref{eq:del-dfn} of $\Delta_t$, that
\begin{align}\label{eq:del-ineq}
\Delta_t \ge \frac{1}{\pi^{(t)}(S_t)} [\pi^{(t)}(S_t)+
\frac{2\gamma}{1-\gamma}\pi^{(t)}(S_t,S_t^c)] = 
1+\frac{2\gamma}{1-\gamma}R_t\,.
\end{align}
Further, with $\pi^{(t)}(S_t)$ a martingale and 
$U_{t+1}$ independent of $S_t$, we have that 
\begin{align*}
1 = \mathbb{E}\big( \Gamma_t \big| S_t \big)
= \frac{1}{2} \mathbb{E} \big(\Gamma_t \big| U_{t+1}\le 1/2, S_t\big) + 
\frac{1}{2} \mathbb{E}\big( \Gamma_t \big|U_{t+1}> 1/2,S_t \big) \,.
\end{align*}
But, from the definition of $\Delta_t$ and of $p_\star(y,t)$ we deduce that 
\begin{align*}
 \mathbb{E} \big(\Gamma_t \big| U_{t+1}\le 1/2, S_t\big) = \Delta_t \,, \quad
\quad \mathbb{E}\big(\Gamma_t \big|U_{t+1}> 1/2,S_t \big)=2-\Delta_t \,.
\end{align*}
Considering first {$\alpha \in (0,1)$},
by Jensen's inequality and the preceding identities,
\begin{align}
\mathbb{E}(\Gamma_t^\alpha|S_t)&=\frac{1}{2} 
\mathbb{E}(\Gamma_t^\alpha|U_{t+1}\le 1/2, S_t)+
\frac{1}{2} \mathbb{E}(\Gamma_t^\alpha|U_{t+1}> 1/2, S_t)\nonumber \\
&\le \frac{1}{2} 
\Big[\mathbb{E}(\Gamma_t |U_{t+1}\le 1/2, S_t)\Big]^\alpha+
\frac{1}{2} \Big[\mathbb{E}(\Gamma_t|U_{t+1}> 1/2, S_t)\Big]^\alpha \nonumber \\
&= \frac{1}{2} \Delta_t^\alpha + \frac{1}{2} (2-\Delta_t)^\alpha
=: f_\alpha (\Delta_t-1) \,. \label{eq:f-alpha-bd}
\end{align}
Next note {that the even function $f_\alpha(\cdot)$ is non-increasing 
on $[0,1]$ when $\alpha \in (0,1)$ and non-decreasing on $[0,1]$
for any other $\alpha \in \R$. Further, $f_\alpha(0)=1$ and
$f_\alpha''(y) = \alpha (\alpha-1) f_{\alpha-2}(y)$. Hence, for $y \in [0,1]$, 
\begin{align}\label{eq:f-al-ubd}
f_\alpha(y) &\le 1 +  \alpha (\alpha-1) \frac{y^2}{2} \,,\qquad \,\alpha \in (0,1), \\
f_\alpha(y) &\ge 1 + \alpha (\alpha-1) \frac{y^2}{8} \,,\qquad \,\alpha \ge 1 \,.
\label{eq:f-al-lbd}
\end{align}
}
It thus follows from  \eqref{eq:del-ineq}--\eqref{eq:f-al-ubd} 
that when $\alpha \in (0,1)$, 
\begin{align}\label{eq:Gamma-ineq}
\mathbb{E}(\Gamma_t^\alpha | S_t) & \le f_\alpha(\Delta_t-1) \le f_\alpha
\Big(\frac{2\gamma}{1-\gamma} R_t\Big)
\le 1-\frac{2\alpha(1-\alpha)\gamma^2}{(1-\gamma)^2}R_t^2 \,.
\end{align}
Our assumption that $\mathbb{G}$ is locally finite, and the construction of
the evolving set $\{S_t\}$ guarantees the finiteness of each $S_t$. Hence, 
from \eqref{eq:isop2} we have that for any $t \ge 0$,
\begin{equation}\label{eq:kappa-enters}
R_t \ge \kappa_t \pi^{(t)}(S_t)^{-1/d} \,.
\end{equation}
Thus, from \eqref{eq:Gamma-ineq} we conclude that for some 
positive $\wt{c}=\wt{c}(\gamma)$ and all $t$, 
\begin{align}
\mathbb{E}\Big[ \frac{\pi^{(t+1)}(S_{t+1})^\alpha}{\pi^{(t)}(S_t)^\alpha}-1
\big| S_t\Big] 
& \le -\frac{2\alpha(1-\alpha)\gamma^2}{(1-\gamma)^2} R_t^2 
\nonumber \\
&\le -\wt{c} \alpha (1-\alpha) \kappa_t^2 \pi^{(t)}(S_t)^{-2/d} \,,
\label{eq:alpha-bd}
\end{align}
and multiplying both sides by $\pi^{(t)}(S_t)^\alpha$ yields the upper bound 
of \eqref{eq:recur}. 

Turning to the proof of \eqref{eq:recur-2}, similarly 
to the derivation of \eqref{eq:f-alpha-bd} and \eqref{eq:Gamma-ineq}
we get from \eqref{eq:del-ineq} and \eqref{eq:f-al-lbd}
that when $\alpha>1$,
\begin{align*}
\bE(\Gamma_t^\alpha|S_t)\ge 
f_\alpha(\Delta_t-1) \ge 
 f_\alpha\big(\frac{2\gamma}{1-\gamma}R_t\big)
\ge
 1+\frac{\alpha(\alpha-1)\gamma^2}{2(1-\gamma)^2}R_t^2\,.
\end{align*}
Using (\ref{eq:kappa-enters}) we find, similarly to 
 the derivation of 
\eqref{eq:alpha-bd},
that now,
\begin{align}\label{eq:alpha-bd-2}
\mathbb{E}\Big[ \frac{\pi^{(t+1)}(S_{t+1})^\alpha}{\pi^{(t)}(S_t)^\alpha}-1
\big| S_t\Big] 
\ge \wt{c} \alpha (\alpha-1) \kappa_t^2 \pi^{(t)}(S_t)^{-2/d} \,,
\end{align}
ending with (\ref{eq:recur-2}).
\end{proof}

Our next lemma embeds 
$\{\pi^{(t)}(S_t)\}$ as the integer time samples of 
a continuous martingale (assuming as before that
$t\mapsto\pi^{(t)}(x)$ are non-decreasing).
\begin{lem}\label{embedding}
There exists a martingale $(M_u, u \ge 0)$ of a.s. continuous sample path,
such that $M_i=\pi^{(i)}(S_i)$ for $i\in\N$ {and 
$\tau=\inf\{u \ge 0 : M_u \le 0 \}$ is 
$\N \cup \{\infty\}$-valued.}
\end{lem}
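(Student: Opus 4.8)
The plan is to build $(M_u)$ by \emph{martingale interpolation}: on each unit interval $[i,i+1]$ we reveal the next evolving-set value $\pi^{(i+1)}(S_{i+1})$ gradually along the path of an auxiliary Brownian motion, and take $M_u$ to be the resulting conditional expectation. By Lemma \ref{heat kernel} the sequence $N_j := \pi^{(j)}(S_j)$ is a nonnegative martingale for the filtration $\mathcal{F}_j := \sigma(S_0,\dots,S_j)$, and $(S_j)$ is itself a Markov chain in which $S_{j+1}$ is a deterministic function $F$ of $S_j$ and the independent Uniform$(0,1)$ variable $U_{j+1}$. So first I would enlarge the probability space to carry i.i.d. standard Brownian motions $(B^{(i)})_{i\ge0}$ on $[0,1]$, realize $U_{i+1}=\Phi(B^{(i)}_1)$ with $\Phi$ the standard normal c.d.f. (so $U_{i+1}$ is Uniform$(0,1)$ and independent of $\mathcal{F}_i$), and rebuild $(S_j)$ from $S_0=\{x\}$ and these $U$'s via Definition \ref{defn-evol}; this leaves the law of $(\pi^{(j)}(S_j))$ unchanged, and now $N_{i+1}$ is a measurable function of $S_i$ and $B^{(i)}_1$.

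Then, with $\mathcal{G}_u := \mathcal{F}_i \vee \sigma(B^{(j)}:\ j<i) \vee \sigma(B^{(i)}_s:\ 0\le s\le u-i)$ for $u\in[i,i+1]$, I would set $M_u := \mathbb{E}\big[\pi^{(i+1)}(S_{i+1})\,\big|\,\mathcal{G}_u\big]$. The tower property makes $(M_u)_{u\in[i,i+1]}$ a martingale for $(\mathcal{G}_u)$; since $B^{(i)}_1$ is $\sigma(B^{(i)}_s:s<1)$-measurable, $M_{i+1}=\pi^{(i+1)}(S_{i+1})=N_{i+1}$, and also $M_i=\mathbb{E}[N_{i+1}\mid S_i]=N_i$ by the martingale property of $(N_j)$ and the Markov property. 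Hence the two interval-definitions agree at the integers, $M_{i^-}=N_i=M_i$, and $(M_u)_{u\ge0}$ is a martingale for the increasing filtration $(\mathcal{G}_u)_{u\ge0}$ with $M_i=\pi^{(i)}(S_i)$ for $i\in\N$. Path-continuity inside each interval is the one analytic point: since $N_{i+1}\in L^1$ (it is a nonnegative martingale with $\mathbb{E}N_{i+1}=\pi^{(0)}(x)<\infty$), every version of the Brownian martingale $u\mapsto M_u$ on $[i,i+1)$ admits a continuous modification, which I would fix; this also avoids needing $N_{i+1}$ to be bounded.

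For the assertion about $\tau$, observe $M_u=\mathbb{E}[N_{i+1}\mid\mathcal{G}_u]\ge0$, so $\{M_u\le0\}=\{M_u=0\}$. For $u\in(i,i+1)$ the increment $B^{(i)}_1-B^{(i)}_{u-i}$ is a non-degenerate Gaussian independent of $\mathcal{G}_u$, whence $M_u=\int_{\mathbb{R}} \pi^{(i+1)}\!\big(F(S_i,\Phi(B^{(i)}_{u-i}+g))\big)\,\varphi(g)\,dg$ with $\varphi$ a strictly positive Gaussian density; as $g\mapsto\Phi(B^{(i)}_{u-i}+g)$ maps onto $(0,1)$, this vanishes iff $\pi^{(i+1)}(F(S_i,v))=0$ for a.e.\ $v\in(0,1)$, i.e.\ iff $\mathbb{E}[N_{i+1}\mid S_i]=N_i=0$. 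Thus, given $S_i$, the path $M$ on $[i,i+1)$ is either identically $0$ (precisely when $N_i=0$, i.e.\ $S_i=\emptyset$) or strictly positive throughout. Since $\pi^{(i)}(S_i)=0$ forces $S_{i+1}=\emptyset$ and hence $\pi^{(i+1)}(S_{i+1})=0$, once $N_i=0$ the chain stays there and $M\equiv0$ on $[i,\infty)$; and $M_0=\pi^{(0)}(x)>0$. Therefore $\{u:M_u\le0\}$ is either empty or equals $[\iota,\infty)$ with $\iota:=\inf\{j\in\N:S_j=\emptyset\}\ge1$, so $\tau=\iota\in\N\cup\{\infty\}$.

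The main obstacle I anticipate is not conceptual but lies in the bookkeeping of the last point: engineering the interpolation so that $M$ cannot dip to $0$ strictly inside an interval unless the discrete martingale has already hit $0$, while keeping $M$ a.s.\ continuous and adapted to a single increasing filtration across all the gluing points. The conditional-expectation construction above is what makes both work cleanly — positivity is automatic because $M_u$ is a conditional expectation of a nonnegative variable, and the only way $M_u=0$ for non-integer $u$ is the degenerate case $S_i=\emptyset$. An alternative route via a Skorokhod embedding of the conditional law of $N_{i+1}-N_i$ into Brownian motion would instead force us to choose a stopping time keeping $N_i+W_\cdot$ nonnegative throughout, a noticeably more delicate requirement, which is why I would favour the gradual-revelation construction.
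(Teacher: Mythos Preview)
Your proposal is correct and follows essentially the same construction as the paper: couple the uniforms $U_{i+1}$ to Brownian increments via $U_{i+1}=\Phi(B^{(i)}_1)$ and define $M_u$ as the conditional expectation of $\pi^{(i+1)}(S_{i+1})$ given the Brownian path up to $u$. The only noteworthy difference is in the continuity argument: the paper writes out the explicit finite sum $M_{i+s}=\sum_{y}\pi^{(i+1)}(y)\,\Phi\big((H_i(S_i,y)-B_{i+s}+B_i)/\sqrt{1-s}\big)$ and checks continuity (including at $s\uparrow 1$) term by term, whereas you invoke the general fact that $L^1$ martingales for a Brownian filtration admit continuous modifications together with closure by $N_{i+1}$ --- both are valid and lead to the same positivity analysis for $\tau$.
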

\begin{proof} With
$\Phi(\cdot)$ the standard normal \abbr{cdf} and
$(B_s, s \ge 0)$ a
standard Brownian motion,   
let $S_0=\{x\}$ and
$U_{i+1}=\Phi(B_{i+1}-B_i)$ the i.i.d. 
Uniform$(0,1)$ variables used to construct $S_{i+1}$ 
from $S_i$ in Definition \ref{defn-evol}. The process $\{S_i\}$ is 
then adapted to 
$\cF_u:=\sigma\{B_s,  s \in [0,u] \}$.
Considering the 
$\cF_u$-adapted process 
\begin{align}\label{eq:cont-mg}
M_u:=\bE[\pi^{(i+1)}(S_{i+1})|\cF_u] \,, \qquad  \forall u \in [i,i+1) \,, i \in \N\,,
\end{align}
we have by the independence of Brownian increments and Lemma \ref{heat kernel}, 
that for any $i \in \N$,
\begin{align}\label{eq:disc-mg}
M_i = \bE[\pi^{(i+1)}(S_{i+1})| \cF_i] = \bE[\pi^{(i+1)}(S_{i+1})|S_i]=\pi^{(i)}(S_i) \,.
\end{align}
Clearly, $(M_u,\cF_u)$ is a (Doob) martingale within each interval $[i,i+1)$.
Upon plugging \eqref{eq:disc-mg} at $i+1$ within
\eqref{eq:cont-mg}, the martingale property
extends to $[i,i+1]$, which by the law of iterated expectations yields 
that $(M_u,\cF_u)$ is a martingale for all $u \ge 0$. 
Turning to the continuity of $u \mapsto M_u$, for any $i \in \N$, $y \in V_{i+1}$ 
and $A \subseteq V_{i}$ let
\begin{align*}
H_i(A,y):=\Phi^{-1}\Big(\frac{\pi^{(i)}(A,y)}{\pi^{(i+1)}(y)}\Big)\,.
\end{align*}
By Definition \ref{defn-evol} and the independence of Brownian increments, we have that 
for any $s\in [0,1)$ and $i \in \N$, 
\begin{align}
M_{i+s}
&=\sum_{y\in V_{i+1}}\pi^{(i+1)}(y)\bP\big(H_i(S_i,y)\ge B_{i+1}-B_i \,| S_i, B_{i+s}-B_i \big) \nonumber \\
&=\sum_{y\in V_{i+1}}\pi^{(i+1)}(y)\Phi\Big(\frac{H_i(S_i,y)-B_{i+s}+B_i}{\sqrt{1-s}}\Big). \label{continuity}
\end{align}
With $s \mapsto B_{i+s}$ continuous, each term of the sum on the \abbr{rhs} of \eqref{continuity} is continuous in $s \in [0,1)$. Having
$\G$ locally finite, only finitely many 
$y \in V$ for which $H_i(S_i,y) \ne -\infty$ contribute to 
that sum, hence $u \mapsto M_u$ is continuous on $[i,i+1)$. 
Further, a.s. $H_i(S_i,y) \ne B_{i+1}-B_i$ for all $y \in V_{i+1}$, in which case
by the continuity of $u \mapsto B_u$ at $i+1$,  
\begin{align*}
\lim_{s\uparrow 1} \Phi\Big(\frac{H_i(S_i,y) - B_{i+s} + B_i}{\sqrt{1-s}}\Big)  
= \bI\{H_i(S_i,y) \ge B_{i+1} - B_i \} \,.
\end{align*}
Upon comparing \eqref{continuity} with Definition \ref{defn-evol}, this extends
the continuity of $u \mapsto M_u$ to $[i,i+1]$ and thereby to all $u \ge 0$.
\newline
{Finally, $M_u$ is non-negative by \eqref{eq:cont-mg},
whereas by \eqref{continuity} it is strictly positive on $[i,i+1)$ unless 
$H_i(S_i,y) = -\infty$ for all $y$, namely $S_i = \emptyset$} 
(in which case $M_u=0$ for all $u \ge i$).
\end{proof}

\medskip
\noindent
{\emph{Proof of Theorem \ref{thm-main}.}} It suffices to prove
\eqref{unbdd-beta} and \eqref{eq:HK-ubd-imp} for $s=0$, as  
$s \in (0,t)$ then follows by considering the edge conductances 
$\{\pi^{(s+\cdot)}\}$ starting at $X_s = x \in V_s$ (and
consequently, using $\beta(u)/\beta(s)$ and
$\psi_{d,\beta}(t)-\psi_{d,\beta}(s)$ instead of 
$\beta(u)$ and $\psi_{d,\beta}(t)$). 

Fixing hereafter $s=0$, 
we start with a short derivation of the sub-optimal bound 
$P(0,x;t,y) \le C' \psi_d(t)^{-(1-\alpha) d/2}$ for 
$\alpha \in (0,1)$, 
non-decreasing $t \mapsto \pi^{(t)}(y) \le C$,
and some $C'=C'(d,\alpha,\gamma,C)$ finite. Indeed,  
\eqref{eq:trans-prob} then 
result with $P(0,x;t,y)\le C^{1-\alpha} m_t$ for 
$m_t = \bE_{\{x\}} [M_t^\alpha]/M_0$ and $M_t=\pi^{(t)}(S_t)$. 
Further, with $\beta=\alpha - \delta (1-\alpha)$, the elementary bound 
\beq
\label{eq:elem-jensen}
\bE [Z^{\beta} \mathbb{I}_{Z>0} ] \ge (\bE [Z^\alpha])^{1+\delta} \,,
\eeq
holds for $Z=M_t/M_0 \ge 0$ of mean one and $\delta>0$. Taking 
the expectation of \eqref{eq:recur}, it thus follows from \eqref{eq:elem-jensen} 
that for $\delta=2/((1-\alpha)d)$,
\beq
m_{t+1} \le m_t \exp(-\widetilde{c} \alpha (1-\alpha) \kappa_t^2 m_t^\delta) \,,
\eeq
and consequently $m_t \le c' \psi_d(t)^{-1/\delta}$ for 
some $c'(\alpha,d,\gamma)$ finite, as claimed. 

However, the sharp bound 
\eqref{eq:HK-ubd-imp} (where $\alpha=0$), requires the more 
elaborate argument provided next,
where we first derive \eqref{eq:HK-ubd-imp} out of \eqref{unbdd-beta}
in case $\pi^{(u)}(x)$ are effectively non-decreasing and uniformly 
bounded. Indeed, by its definition in \eqref{eq:isop2},
\begin{equation}\label{eq:kappa-bd}
\kappa_i \le \inf_{v \in V_i} \pi^{(i)}(\{v\})^{1/d} \le C^{1/d} \,, \qquad \forall i \ge 0 
\end{equation}
and consequently $\beta(u)^{1/d} \kappa_u \le (\eta_{\star} C)^{1/d}$.
Thus, condition \eqref{eq:r-choice} holds (for $\lambda=1/3$) whenever
$\psi_{d,\beta} (t) \ge 3 (\eta_{\star} C)^{2/d}$. Since $\pi^{(t)}(x) \le C$,
it follows from \eqref{eq:eff-non-dec} that 
$$
\beta(t) \ge \sup_x \Big\{\frac{\pi^{(0)}(x)}{\pi^{(t)}(x)}\Big\} 
\ge \frac{\eta_0}{C}\,,
$$  
hence the condition \eqref{eq:r-choice} holds whenever 
\begin{equation}\label{eq:xi-bd}
\xi(t) := (\eta_{\star}/\beta(t))^{2/d} \psi_{d,\beta}(t) \ge 
3 (\eta_{\star}^2 C^2/\eta_0)^{2/d} \,,
\end{equation}
in which case
multiplying the inequality \eqref{unbdd-beta} by $\pi^{(0)}(x)$ yields 
the bound
\begin{align}\label{eq:HK-ubd}
\pi^{(0)} (x) h(0,x;t,y) \le c_{+} C \eta_{\star} \xi(t)^{-d/2} 
\le c_{\star} (e + \xi(t) )^{-d/2} \,,
\end{align}
for some $c_\star=c_\star
(d,c_+,\eta_0,\eta_\star,C)$ finite.
Next, recall that by \eqref{eq:ht-def} and \eqref{eq:trans-prob},  
for any $t \in \N$,
\begin{align}
\pi^{(0)} (x) h(0,x;t,y)= \bP_{\{x\}}(y\in S_t)
\le \bP_{\{x\}}(S_t \ne \emptyset) 
= \bP_{\{x\}}(M_t \ne 0) \,,
\label{non-empty}
\end{align}
for the continuous, non-negative  
$\bP$-martingale $\{M_u\}_{u\ge 0}$ of Lemma \ref{embedding}.
In view of \eqref{non-empty}, the \abbr{lhs} of \eqref{eq:HK-ubd} 
is at most one, hence increasing $c_\star$ guarantees that 
\eqref{eq:HK-ubd} trivially holds whenever \eqref{eq:xi-bd} fails.
Having effectively non-decreasing $t \mapsto \pi^{(t)}(x)$, implies
further that $\xi(t) \ge \psi_d(t)$ and thus \eqref{eq:HK-ubd-imp}
is a consequence of \eqref{eq:HK-ubd}.

Turning to the proof of \eqref{unbdd-beta}, note that  
multiplying all edge conductances $\{\pi^{(u)}(x,y)\}$ by a common 
factor does not effect the transition probabilities of 
the associated random walk at step $u$. Hence, re-defining 
the edge conductances 
$$
\wh{\pi}^{(u)}(x,y)=
\beta(u)
\pi^{(u)}(x,y), \quad u\in\N, \quad (x,y)\in E,
$$
results with $h(s,x;t,y)=\beta(t)\wh{h} (s,x;t,y)$,   
$\psi_{d,\beta}(\cdot)=\wh{\psi}_d(\cdot)$ and non-decreasing 
$u\mapsto \wh{\pi}^{(u)}(x)$. We consequently proceed to bound
the \abbr{rhs} of \eqref{non-empty}, for non-decreasing
$u\mapsto\pi^{(u)}(x)$ and $\beta(u) \equiv 1$. To this end, 
we utilize the stopping times 
\begin{align}\label{def:st-tm}
\tau_k:={\inf\{u\ge 0: M_u\ge e^k\}},
\quad T_k':={\inf\{i \in \N \cap (\tau_{k},\infty) 
: M_i = 0 \}} 
\end{align}
and note that for $r \in (0,t)$ of \eqref{eq:r-choice} 
and any $k \in \Z$, 
\begin{equation}\label{eq:stop-bd}
{\{M_t \ne 0 \}\subseteq} \{\tau_k>r\}\cup\{\tau_k \le r,\, T'_k>t\} \,.
\end{equation}
Further, for 
$\wt{M}:=\sup_{u\ge0} \{M_u\}$ and 
$E_k:=\{e^k\le \wt{M}<e^{ k+1}\}$, by Doob's inequality 
\begin{equation}\label{eq:doob-bd}
\bP_{\{x\}} (E_k) \le \bP_{\{x\}}{(\wt{M} \ge e^k)} \le \pi^{(0)}(x) e^{-k} \,.
\end{equation}
Thus, fixing $\vep \in (0,1)$ and  
setting $k_0:=\lfloor \log \pi^{(0)}(x) \rfloor$,
$L:=\lceil \log (\vep^2 \psi_d(t)^{d/2}) \rceil$, 
we get from 
\eqref{eq:stop-bd} and
\eqref{eq:doob-bd} that  
\begin{align}
\bP_{\{x\}} (M_t \ne 0)  \le
\bP_{\{x\}}{ (\wt{M}\ge e^L)} &+
\sum_{k=k_0}^{L-1}
\bP_{\{x\}}({\{M_t \ne 0\}} \cap E_k)
\nonumber\\
 \le \pi^{(0)}(x) \big[ e^{-L} &+ 
\sum_{k=k_0}^{L-1} e^{-k} \bP_{\{x\}}(\tau_k>r|{\wt{M}\ge e^k}) 
\nonumber\\
&+
\sum_{k=k_0}^{L-1} e^{-k} \bP_{\{x\}}(T'_k > t | E_k,\tau_k\le r) \big] \,. 
\label{reduce-to-tail}
\end{align}

Noting that $e^{-L}$ is of $O(\psi_d(t)^{-d/2})$ 
size, the remainder of the proof consists of three 
steps. First, by the continuity of our non-negative 
martingale, and the lower bound of \eqref{eq:recur-2}
on its quadratic variation, we show in {\bf Step I} that 
conditioning on $\{\wt{M} \ge e^k\}$ transforms the law 
of $\{S_0,\ldots,S_r\}$ to that of 
Definition \ref{defn-DF-evol}. Then, {\bf Step II}
shows that the probability of 
$\max_{i \le r} \{\pi^{(i)}(S_i)\}$ not exceeding 
$e^k$ for such size-biased evolving sets, is 
at most $O(\exp(-c \psi_d(r) e^{-2k/d}))$ and 
as a result the left sum in \eqref{reduce-to-tail}
is at most $O(\psi_d(r)^{-d/2})$ (see \eqref{bound-1}).
Noting that under $\{\tau_k \le r\}$ the probability 
of $E_k = \{\tau_{k+1}=\infty\}$ is  
bounded away from zero, {\bf Step III} 
controls the right sum over $k$ 
in \eqref{reduce-to-tail},
as $\{E_k, \tau_k \le r\}$ dictates a downward 
path $e^{a_{i+1}}$ driving $u \mapsto \pi^{(u)}(S_u)$, $u=\lceil \tau_k \rceil + i$, to zero at $u=t$, 
or else the super-martingale $Q_{i \wedge \sigma} \ge 0$ 
with $Q_0 \le c_5 e^{k/2} \psi_d(t)^{-d}$, 
must exceed $O(e^{-3k/2})$, an event whose 
probability is $O(e^{2k} \psi_d(t)^{-d})$.   

\smallskip
\noindent
{\bf Step I.}
The $\bP$-martingale $(M_u,\cF_u)$ is non-negative, continuous, hence converges
$\bP$-almost surely to a finite limit $M_\infty$. Further, 
$M_u = M_0 + W_{\langle M\rangle_u}$ for 
a standard Brownian motion $(W_s, s \ge 0)$, time changed by the 
quadratic variation $\langle M \rangle_u$ (e.g. \cite[Theorem 3.4.6, Problem 3.4.7]{KS}).
In particular, having a.s. finite $M_\infty$ implies the same for 
$\langle M \rangle_\infty$. In view of Lemma \ref{embedding}, 
for any $i \in \N$, 
\begin{align*}
\langle M\rangle_i &\ge \sum_{j=1}^i\bE[M_j^2-M_{j-1}^2|\cF_{j-1}] 
\ge 2 \wt{c} \, \sum_{j=0}^{i-1} \kappa_j^2 M_j^{2-2/d} \,,
\end{align*}
with the right inequality due to Lemma \ref{recursion} (for $\alpha=2 > 2/d$).
Since $\psi_d(\infty)=\infty$, it then follows that 
\begin{align*}
\liminf_{i\to\infty} \, \frac{\langle M \rangle_i}{\psi_d(i)} \ge \, 2 \wt{c} 
\, \liminf_{i\to\infty} \, \frac{1}{\psi_d(i)} \sum_{j=0}^{i-1} \kappa_j^2 M_{j}^{2-2/d}
=2 \wt{c} \, M_\infty^{2-2/d} \,. 
\end{align*}
We thus see that with probability one, if $M_\infty > 0$ then 
$\langle M \rangle_\infty = \infty$, out of which we deduce 
that necessarily $M_\infty=0$. The a.s. convergence to zero of $M_t$ 
allows us in turn to deduce that for any $u \ge 0$ and $z>0$, 
\begin{equation}\label{eq:bd-max-mg}
\bP(\sup_{t \ge u} \{M_t\} \ge z | \cF_u) = \frac{M_u}{z} \wedge 1 \,.
\end{equation}
Indeed, in case $M_u=0$ the martingale condition implies that a.s. 
$M_t \equiv 0$ for all $t \ge u$, whereas for $M_u \in (0,z)$ we get 
\eqref{eq:bd-max-mg} by applying for example \cite[Problem 1.3.28(i)]{KS}.

Turning to bound the left-sum in \eqref{reduce-to-tail}, note that subject to 
$\bI\{\tau_k>r\}$, the probability of $\{\wt{M} \ge e^k \}$ given $\cF_r$ is 
precisely the \abbr{lhs} of \eqref{eq:bd-max-mg} for $z=e^k$ and $u=r$.
With the unconditional probability given by \eqref{eq:bd-max-mg} with $u=0$, it thus follows 
that 
\begin{equation}\label{eq:ch-meas-bd}
\bP_{\{x\}}(\tau_k>r|{{\wt{M}\ge e^k}})=\bE_{\{x\}} \Big[\frac{M_r}{M_0} \, \bI(\tau_k>r)\Big]
\le \bE_{\{x\}} \Big[\frac{M_r}{M_0} \, \bI(T_k>r) \Big] \,,
\end{equation}
where $T_k = \inf \{ i \in \N : \pi^{(i)}(S_i) \ge e^k\}$ is  
the discrete-time analog of $\tau_k$ of \eqref{def:st-tm} (hence
necessarily $T_{k} \ge \tau_k$). Next note that the \abbr{rhs} 
of \eqref{eq:ch-meas-bd} equals $\wh{\bP}(T_k>r)$ for the 
martingale change of measure 
$$
\frac{{\sf d}\wh{\bP}}{{\sf d}\bP} (S_0,\ldots,S_r) = \frac{\pi^{(r)}(S_r)}{\pi^{(0)}(x)} \,.
$$ 
The measure $\wh{\bP}$ is thus given by the time-in-homogeneous 
Doob $h$-transform of the evolving sets process, for 
$h(t,A)=\pi^{(t)}(A)$, namely the measure
corresponding to the transition kernel $\wh{K}(\cdot,\cdot)$
of \eqref{eq:whK-def}. That is, $\wh{\bP}$ is the law of the 
conditioned (size-biased) evolving set of Definition \ref{defn-DF-evol}. 
 
\noindent
{\bf Step II.}
Under $\wh{\bP}$ with probability one 
$S_i$ are non-empty and  
$Y_i:=\pi^{(i)}(S_i)^{-1/2}\mathbb{I}_{\{T_k>i\}}$ finite, whereby from
Markov's inequality and \eqref{eq:ch-meas-bd} we deduce that for any $k$,
\begin{align}
\bP_{\{x\}}(\tau_k>r|{{\wt{M}\ge e^k}}) &\le \wh{\bP}_{\{x\}}(T_k>r)
\nonumber \\
&= \wh{\bP}_{\{x\}}(Y_r>e^{-k/2}) \le e^{k/2} \wh{\bE}_{\{x\}}(Y_r) \,.
\label{eq:ch-meas-bd2}
\end{align}
Further, by Lemma \ref{recursion} with $\alpha=1/2$ and $c=\wt{c}/8>0$, we have that  
$\wh{\bP}$-a.e. if $Y_i>0$, namely $T_k>i$, then  
\begin{align*}
\wh{\bE}_{\{x\}} \big(Y_{i+1}|Y_i\big)&=\bE_{\{x\}} \Big(\frac{\pi^{(i+1)}(S_{i+1})^{1/2}\mathbb{I}_{\{T_k>i+1\}}}{\pi^{(i)}(S_i)}|Y_i\Big)\\
&\le Y_i^2 \bE_{\{x\}} \big(\pi^{(i+1)}(S_{i+1})^{1/2}|Y_i\big)
\le Y_i(1- 2 c \kappa_i^2 Y_i^{4/d})\,. 
\end{align*}
Note that either $Y_i=0$, that is $\{T_k \le i\}$, in which case 
necessarily $Y_{i+1}=0$ and the preceding inequality holds, or else
by definition $Y_i>e^{-k/2}$. Thus, $\wh{\bP}$-a.e. for all $i$ and $Y_i$,
\begin{align}
\wh{\bE}_{\{x\}}(Y_{i+1}|Y_i)\le  Y_i\big[1- 2 c \kappa_i^2(Y_i^{4/d}\vee e^{-2k/d})\big]. 
\label{sup-MG}
\end{align}
Recall \cite[Lemma 12]{MP2} that $E[2 Z f(2Z)] \ge (E Z) f(E Z)$ for any
$Z \ge 0$ and non-decreasing $f: \R_+ \mapsto \R_+$. In particular, with 
$l_i:=\wh{\bE}_{\{x\}}(Y_i)$ and 
$f(y)=(y/2)^{4/d} \vee e^{-2k/d}$, we deduce upon taking the expectation 
of \eqref{sup-MG} that
\begin{align}\label{eq:li-mon}
l_{i+1}\le l_i-c \kappa_i^2 l_i f(l_i)\le l_i e^{-c \kappa_i^2 f(l_i)}\,. 
\end{align}
With $f(l_i)$ strictly positive it thus follows 
that either $l_i=0$, or else
\begin{equation}\label{eq:lbd-li}
\int_{l_{i+1}}^{l_i} \frac{dz}{zf(z)} \ge \frac{1}{f(l_i)} \int_{l_{i+1}}^{l_i} 
\frac{dz}{z} = \frac{1}{f(l_i)} \log \frac{l_i}{l_{i+1}} \ge c \kappa_i^2 \,.
\end{equation}
Hence, if $l_r>0$ then by \eqref{eq:li-mon}, $l_i>0$ for $i<r$ and
summing \eqref{eq:lbd-li} over $0 \le i < r$, yields 
\begin{align}
c \psi_d(r) \le \int_{l_r}^{\infty} (2^{4/d}z^{-1-4/d})\wedge (e^{2k/d}z^{-1})dz  \label{bound-lr}
\end{align}
(which trivially 
holds also when $l_r=0$).
We proceed to rule out having $l_r > 2e^{-k/2}$. Indeed, in that case
we get from \eqref{bound-lr} that 
\begin{align*}
c \psi_d(r)\le\int_{l_r}^\infty 2^{4/d}z^{-1-4/d}dz=2^{4/d}(d/4)l_r^{-4/d},
\end{align*}
whereby $l_r\le c'\psi_d(r)^{-d/4}$ for $c'=2(4d/c)^{d/4}$. As 
$k<L$, this yields in view of \eqref{eq:r-choice} and our choice of $L$ that
\begin{align*}
\vep^{-1} \psi_d(t)^{-d/4} \le e^{-(L-1)/2} < 2 e^{-k/2}
<l_r\le c'\psi_d(r)^{-d/4} \le c' 3^{d/4} \psi_d(t)^{-d/4} \,, 
\end{align*}
yielding a contradiction when $\vep= (1/c') 3^{-d/4}$.  
Taking hereafter such $\vep$ we thus have that $l_r \le 2e^{-k/2}$ in which case
\eqref{bound-lr} yields 
\begin{align*}
c \psi_d(r) \le e^{2k/d} \int_{l_r}^{2e^{-k/2}} \frac{dz}{z} +
2^{4/d} \int_{2e^{-k/2}}^{\infty} \frac{dz}{z^{1+4/d}} 
\le e^{2k/d}\big(\log (2 e^{-k/2}/l_r) + c_0\big),
\end{align*}
for some finite $c_0=c_0(d)$. That is, for $c_1 = 2 e^{c_0}$ finite,
$$
l_r \le c_1 e^{-k/2} \exp\big\{- c \psi_d(r)e^{-2k/d}\big\} \,.
$$
Plugging this bound in the \abbr{rhs} of \eqref{eq:ch-meas-bd2}, we bound
the left sum in \eqref{reduce-to-tail} after change of variable 
$s=e^{-2k/d}\psi_d(r)$, by
\begin{align}
\sum_{k=k_0}^{L-1}e^{-k}\bP_{\{x\}}(\tau_k>r&|{{\widetilde{M}\ge e^k}}) \le c_1 
\sum_{k=k_0}^{L-1} e^{-k} \exp\{-c \psi_d(r)e^{-2k/d}\}\nonumber\\
&\le c_2\int_0^\infty e^{-c s}(s/\psi_d(r))^{d/2}s^{-1}ds\le  c_3\psi_d(r)^{-d/2}, \label{bound-1}
\end{align}
for some finite constants $c_j=c_j(d,\gamma)$, $j=2,3$.

\smallskip
\noindent
{\bf Step III.}
Moving next to bound the right sum in \eqref{reduce-to-tail}, conditioning on 
$\{\cF_{\lceil \tau_k \rceil}, \tau_k \le r\}$ 
we have by the strong Markov property at
$\lceil \tau_k\rceil$ that,
$$
\wt{S}_{i}:=S_{\lceil \tau_k\rceil+i}, \quad i \ge 0, 
$$
is an evolving set process for conductances 
$\wt{\pi}^{(i)}(\cdot):=\pi^{(\lceil \tau_k\rceil +i)}(\cdot)$, with which we also associate
$$
\wt{\kappa}_{i} := \kappa_{\lceil \tau_k\rceil +i},\qquad
\wt{\psi}_d(i):=\psi_d(\lceil \tau_k\rceil +i)-\psi_d(\lceil \tau_k\rceil)\,.
$$
Note that if $k \ge k_0$ then $\tau_k>0$ and hence
$M_{\tau_k}=e^k$ whenever $\tau_k < \infty$.  
Thus, from \eqref{eq:bd-max-mg} at 
the stopping time $u=\tau_k \le r$, we deduce that 
$$
\bP_{\{x\}}(E_k | \tau_k \le r) = \bP_{\{x\}}
(\tau_{k+1}=\infty| \tau_k \le r) = 1 - e^{-1} \,.
$$
Consequently, for $c_4=1/(1-e^{-1})$, $k \ge k_0$
and any 
$\cF_{\lceil \tau_k \rceil + i}$-stopping time
$$
\sigma := \inf\{i \ge 0: \wt{\pi}^{(i)}(\wt{S}_{i}) > 
e^{a_{i+1}} \} \,,
$$
with $a_{t-\lceil \tau_k \rceil}=-\infty$, one has that
\begin{align}
\bP_{\{x\}}(T_k' > t \,|\, E_k, \tau_k\le r)
&= c_4
\bP_{\{x\}}(T_k' > t, 
\tau_{k+1}=\infty \,|\,  \tau_k\le r) \nonumber \\
& \le c_4
\bP_{\{x\}}(\sigma <
 t - \lceil \tau_k \rceil,
\tau_{k+1}=\infty \,|\, 
 \tau_k\le r) \,.
 \label{eq:cond-bd}
\end{align}
In particular, we shall employ \eqref{eq:cond-bd} for 
the non-increasing $a_{i}$ such that 
\begin{align}
\label{a_i-defn}
\frac{\tilde{c}}{4} \wt{\kappa}_{i}^{2}
=2 \int_{a_{i+1}}^{a_i} e^{2z/d} dz
\,, \qquad 0 \le i < t-\lceil \tau_k\rceil\,.
\end{align} 
To this end, we first show that 
$(Q_{i \wedge \sigma},\cF_{\lceil \tau_k \rceil + i})$,
$i < t - \lceil \tau_k \rceil$ 
is a super-martingale, for  
\begin{align*}
Q_{i}:=e^{-2a_{i}} \wt{Y}_i \,,\qquad
\wt{Y}_{i}:=\wt{\pi}^{(i)}(\wt{S}_{i})^{1/2}\bI(\tau_{k+1}>\lceil \tau_k \rceil +i) \bI(\tau_k \le r) \,.
\end{align*}
Indeed, applying Lemma \ref{recursion} (for $\alpha=1/2$), 
to the evolving process $\{\wt{S}_i\}$, 
if $\tau_k \le r$ and 
$\tau_{k+1} >\lceil \tau_k \rceil +i$ then
\begin{align*}
 \bE_{\{x\}} [\wt{Y}_{i+1} |
\cF_{\lceil\tau_k\rceil +i}]
\le \wt{Y}_i \big(1-\frac{\tilde{c}}{4} \wt{\kappa}_{i}^{2} {\wt{Y}_i}^{-4/d}\bI_{\{\wt{Y}_{i}>0\}}\big) \,.
\end{align*}
This inequality trivially holds if either
$\{\tau_{k+1} \le \lceil \tau_k \rceil + i\}$
or $\{\tau_k>r\}$
(whereby both sides are zero), yielding that 
for $i<t-1-\lceil \tau_k \rceil$
\begin{align}
\bE_{\{x\}} [Q_{i+1}|\cF_{\lceil\tau_k\rceil +i}]
\le Q_i \exp\big\{ 2(a_{i}-a_{i+1}) -
\frac{\tilde{c}}{4} 
\wt{\kappa}_i^{2} (\wt{Y}_i)^{-4/d} \bI_{\{\wt{Y}_{i}>0\}}\big\}. 
\label{sup-MG-again}
\end{align}
Recall that our choice of $a_i$ in (\ref{a_i-defn}), 
implies that 
\begin{align}
\frac{\tilde{c}}{4} \wt{\kappa}_{i}^{2}
\ge 2 (a_i - a_{i+1}) e^{2 a_{i+1}/d}
\ge 2 (a_i-a_{i+1})(\wt{Y}_{i})^{4/d} \,,
\label{mon-a_i}
\end{align}
when $\wt{Y}_{i} \le e^{a_{i+1}/2}$. Thus,
the exponent on the \abbr{rhs} of
(\ref{sup-MG-again}) is non-positive 
when both $i<\sigma$ and $\wt{Y}_{i}>0$, 
in which it follows from (\ref{sup-MG-again}) that 
$$
\E_{\{x\}} (Q_{(i+1) \wedge \sigma}
|\cF_{\lceil \tau_k\rceil +i}) 
\le Q_{i \wedge \sigma}\,.
$$
As this inequality trivially holds with equality 
when $i \ge \sigma$, as well as when $\wt{Y}_i=0$
(for then also $\wt{Y}_{i+1}=0$), we have 
the claimed super-martingale property. 
\newline
Now, since $\wt{Y}_i < e^{(k+1)/2}$, if
$\tau_k \le r$ then by \eqref{a_i-defn},
\begin{align*}
Q_0 \le e^{-2a_{0}} e^{(k+1)/2}
\le c_5 e^{k/2}(\psi_d(t)-\psi_d(r))^{-d}\,,
\end{align*}
for some $c_5=c_5(d,\gamma)$
finite. Further, by the definition of $\sigma$, if
$i=\sigma < t - \lceil \tau_k \rceil$
then $\wt{S}_i$ is non-empty, hence 
$\wt{\pi}^{(i)}(\wt{S}_i)^{1/d} \ge \wt{\kappa}_i$ 
(see \eqref{eq:kappa-bd}). It then follows 
from \eqref{a_i-defn} that 
$$
e^{2 a_i/d} = e^{2 a_{i+1}/d} + 
\frac{\tilde{c}}{4d} \wt{\kappa}_{i}^{2}
\le e^{2 a_{i+1}/d} + 
\frac{\tilde{c}}{4d} \wt{\pi}^{(i)}(\wt{S}_i)^{2/d}
$$
which by definition of $\sigma$ implies that also 
\begin{equation}\label{eq:bd-sigma}
\wt{\pi}^{(i)}(\wt{S}_i) \ge c_6 e^{a_i} 
\end{equation}
for $c_6 := (1+\tilde{c}/(4d))^{-d/2}$ positive.  
In case $\tau_{k+1}=\infty$ it further 
suffices to consider only those $i \ge 0$ for which 
the \abbr{rhs} of \eqref{eq:bd-sigma} is at most
$e^{(k+1)}$, implying in turn that 
(when also $\tau_k \le r$),
$$
Q_i = e^{-2 a_i} \wt{\pi}^{(i)}(\wt{S}_i)^{1/2}
\ge c_6^2 (c_6 e^{a_i})^{-3/2} 
\ge c_6^2 e^{-3(k+1)/2} \,.  
$$
In conclusion, when $\tau_k \le r$, 
\begin{align*} 
\{ \sigma < t - \lceil \tau_k \rceil, \tau_{k+1}=\infty \}
\subseteq
\{ \sigma < t - \lceil \tau_k \rceil, 
Q_{\sigma} \ge c_6^2 e^{-3(k+1)/2} \} \,.
\end{align*}
Applying Doob's optional stopping 
to the non-negative super-martingale\\ 
$\{Q_{i \wedge \sigma}\}$ we further bound 
the \abbr{rhs} of \eqref{eq:cond-bd} by 
\begin{align*}
c_4 \bP_{\{x\}} (
\sigma < t - \lceil \tau_k \rceil, Q_{\sigma} 
\ge c_6^2 e^{-3(k+1)/2} | \tau_k \le r) 
& \le c_7 e^{3k/2} \bE_{\{x\}} (Q_0 | \tau_k \le r) 
\\
& \le c_8 e^{2 k} (\psi_d(t)-\psi_d(r))^{-d}  \,,
\end{align*}
for some finite $c_j(d,\gamma)$, $j=7,8$. In view of 
\eqref{eq:cond-bd}
the right sum of (\ref{reduce-to-tail}) is thus bounded by
\begin{align}
\sum_{k=k_0}^{L-1}e^{-k}\bP_{\{x\}}
(T'_k > t |E_k, \tau_k\le r)\le c_9 e^{L}(\psi_d(t)-\psi_d(r))^{-d}.
\label{bound-2}
\end{align}
For our choice of $L$, the bound \eqref{unbdd-beta} follows from 
(\ref{reduce-to-tail}), (\ref{bound-1}) and (\ref{bound-2}).
\qed

\medskip
\section{Proofs of Propositions \ref{csrw}, \ref{percol} and \ref{prop-df}}\label{sec-percol}

\medskip
\noindent
{\emph{Proof of Proposition \ref{csrw}.}} 
Let $\{\tau_j\}$ be a collection of i.i.d $\exp(2)$ random  variables. We
simulate the \abbr{CSRW} using $T_k:=\sum_{j=1}^k \tau_j$ as our successive Poisson clocks and independently designate that each time $T_k$ the clock rings,  with probability $(1/2)$ the walk $Y_t$ stays put, and 
with probability $(1/2)$ it makes a jump according to 
the given edge conductances at time $T_k$. 
By the thinning property of the Poisson process, the simulated 
process $t \mapsto Y_t$ is the \abbr{CSRW} of Definition \ref{defn:csrw}. 
On the other hand, the sampled process $X_k=Y_{T_k}$ has the law of
$(1/2)$-lazy discrete time random walk on $\bG$, with time-varying edge conductances $\pi^{(T_k)}(x,y)$ that are in $[C_1^{-1},C_1]$ for 
every realization $\omega$ of $\{T_k\}$. Consequently, 
denoting $N_t=\max\{k\in\mathbb{N}: T_k\le t\}$, a Poisson process
of rate $2$, we have as in Corollary \ref{cor:u-ellip} 
that for some $C_2=C_2(d,C_1)>0$, any $t \ge s \ge 0$ and all $\omega$, 
\begin{align*}
\psi^\omega_d(t)-\psi^\omega_d(s)\ge C_2(N_t-N_s).
\end{align*}
From Definition \ref{rcll-eff-non-dec} of the effectively non-decreasing 
\abbr{RCLL} conductances  $t\mapsto\pi^{(t)}(x)$ and \eqref{eq:HK-ubd-imp}, 
for a.e. $\omega=\{T_k\}$ there exists 
$c^\omega_\ast=c^\omega_\ast(d,C_1)$ finite,
such that the quenched heat-kernel bound  
\begin{align}\label{eq:quenched-hk-ubd}
P^\omega(s,x;t,y)\le {c^\omega_\ast}\big[ e+C_2(N_t-N_s)
\big]^{-d/2}=:{c^\omega_\ast\phi(N_t-N_s),}
\end{align}
applies for the transition probabilities $P^\omega(s,x;t,y)$ 
of the \abbr{CSRW} $\{Y_t\}$.
With $\phi(\cdot)$ positive and decreasing on $\R_+$, we have that 
$$
\phi(N_t-N_s) \le 
\phi(0) {\mathbb I}_{\{N_t - N_s \le t-s\}} + \phi(t-s) \,,
$$
and consequently 
\begin{align}
\int_0^\infty \bE[{(c_\ast^\omega)^{-1}P^\omega(0,x;t,y)}] \mathrm{d} t 
 \le \phi(0) \int_0^\infty \P(N_t \le t) \mathrm{d} t 
+ \int_0^\infty \phi(t) \mathrm{d}t 
\label{eq:loc-tim-csrw}
\end{align}
is finite. Thus, by Fubini's theorem  
$\int_0^\infty (c_\ast^\omega)^{-1}P^\omega(0,x;t,y) dt$ is 
finite for a.e. $\omega$,
which together with the finiteness of $c_\ast^\omega$ implies that 
$\int_0^\infty P^\omega(0,x;t,y) dt$ is finite. That is, 
starting at any non-random $x \in V$
we have 
a finite 
total local time for the \abbr{csrw} 
at any $y \in V$. Hence, for a.e. $\omega$ the 
sampled process at jump times $\{Y_{T_k}\}$, 
visits every $y \in V$ only finitely often.
\qed

\medskip
\noindent
{\emph{Proof of Proposition \ref{percol}.} } 
In view of Theorem \ref{thm-main} 
with $V=\mathcal{V}(\bD_0)$
 and Remark \ref{rmk:trans}, with $d/2>1$, {
we have the stated claim upon showing that 
for any $\theta=\theta_{\text{iso}}>0$, there exists some $T=T(\omega, \theta)<\infty$ and constant $c'(\theta,d)>0$ such that the 
isoperimetric growth function satisfies 
\begin{equation}\label{eq:grow-trans}
\psi_d(t) \ge c't^{1-\theta(1-1/d)} \,, \qquad \forall t \ge T, {{\quad P^\omega\text{-a.s.}}} 
\end{equation}
(as then $\psi_d(t)^{-d/2}$ would be summable upon taking $\theta$ sufficiently small.)}

To this end, let $\bD_u^{\ell}$ denote the vertices of  
$\bD_u \cap [-\ell,\ell]^d$ and recall that 
starting at $x=0$ we have that the evolving set
$S_u \subseteq \bD_u^u$ (because the \abbr{srw}
has at most linear growth in each direction).
Here $\pi^{(u)}(x,y) \in \{0,1\}$ so we are just counting edges. Further,
with all degrees of vertices of $\bD_u$ within $[1,2d]$, we replace
$\pi^{(u)}(A)$ by the size $|A|^{(u)}$ of $A \cap \bD_u$ with 
$|\partial A|^{(u)}=\pi^{(u)}(A,A^c)$. 
By \cite[Theorem 1.2]{PRS} (together with Borel-Cantelli lemma), the 
unique infinite percolation cluster $\bD_0$ of \cite{PRS}
satisfies the following isoperimetric inequality for some $c=c(\theta)>0$ and all
$l \ge l_0(\omega,\theta)$ large enough
\begin{align}
\inf_{A\subseteq \bD_0^l, |A|\le|\bD_0^l|/2}
\; \Big\{ \frac{|\partial_{\bD_0^l}A|}{|A|^{(d-1)/d}} \; \Big\} 
\ge cl^{-\theta(1-1/d)} {{\quad P^\omega\text{-a.s.}}}
\label{perc-isop}
\end{align}
Moreover, since to $\bD_u$ we only add edges and no new vertices, clearly 
$|\partial A|^{(u)}\ge|\partial A|^{(0)}$, and $|A|^{(0)} = |A|^{(u)}$, with  
the inequality (\ref{perc-isop}) holding uniformly for all $\{\bD_u\}$. 
{Applying (\ref{perc-isop}) to sets $S_u\subseteq \bD_u^{2u}$, we have that
\begin{align*}
\kappa_u \ge c'u^{-\theta(1-1/d)},
\end{align*}
yielding all $t \ge 2 T$, the claimed growth of \eqref{eq:grow-trans},
\begin{equation*}
\qquad\qquad\qquad
\psi_d(t) \ge \sum_{u=T}^{t-1} \kappa_u^2 \ge c'^2 (t-T) t^{-\theta(1-1/d)}\ge \frac{c'^2}{2}t^{1-\theta(1-1/d)}.
\qquad\qquad\qquad\qquad\qquad{\qed}
\end{equation*}
}

\bigskip
\noindent
\emph{Proof of Proposition \ref{prop-df}.}
With (a) and (b) trivially holding at $t=0$, we proceed by induction on $t$. 
Specifically, we assume that both (a) and (b) hold for some $t \ge 0$.
Then, with $\mathbf{S}_t=(S_0,\ldots,S_t)$, by the definition of 
$P(\cdot;\cdot)$ and $P^*(\cdot;\cdot)$, our 
hypothesis of (b) holding for $t$ implies that
for any $v \in B$ such that $\pi^{(t)}(S_t,v)>0$, 
\begin{align}
\mathbb{P}^*_{x,\{x\}} & (X_{t+1}=v, S_{t+1}=B|\mathbf{S}_t) \notag \\
&=\sum_{w\in S_t}\mathbb{P}^*(X_{t+1}=v, S_{t+1}=B|X_t=w,\mathbf{S}_t)\mathbb{P}^*_{x,\{x\}}(X_t=w|\mathbf{S}_t) \notag \\
&=\sum_{w\in S_t}\frac{P(t,w;t+1,v)K(t,S_t;t+1,B)\pi^{(t+1)}(v)}{\pi^{(t)}(S_t,v)}\frac{\pi^{(t)}(w)}{\pi^{(t)}(S_t)}  \notag \\
&=\frac{\pi^{(t+1)}(v)}{\pi^{(t)}(S_t)}\frac{\sum_{w\in S_t}\pi^{(t)}(w)P(t,w;t+1,v)}{\pi^{(t)}(S_t,v)}K(t,S_t;t+1,B) \notag \\
&=\frac{\pi^{(t+1)}(v)}{\pi^{(t)}(S_t)}K(t,S_t;t+1,B)\,.
\label{eq:joint-law-df}
\end{align}

By Definition \ref{defn-DF-evol}, the conditioned evolving set is such 
that $X_{t+1} \in S_{t+1}$ so
the \abbr{lhs} of \eqref{eq:joint-law-df} is zero when $\pi^{(t)}(S_t,v)=0$.
Consequently, summing in \eqref{eq:joint-law-df} over $v\in B$ we find that
\begin{align}\label{eq:law-evol-df}
\mathbb{P}^*_{x,\{x\}}(S_{t+1}=B|\mathbf{S}_t)
=\frac{\pi^{(t+1)}(B)}{\pi^{(t)}(S_t)}K(t,S_t;t+1,B) = \widehat{K}(S_t,B)\,,
\end{align}
and thereby verify that our claim (a) extends up to $t+1$. Further, 
the ratio of \eqref{eq:joint-law-df} and \eqref{eq:law-evol-df} 
results with  
\begin{align*}
\mathbb{P}^*_{x,\{x\}}&(X_{t+1}=v | S_{t+1}=B, \mathbf{S}_t)=\frac{\pi^{(t+1)}(v)}{\pi^{t+1}(B)} \,,
\end{align*}
which amounts to the claimed property (b) at $t+1$.
\qed

\end{section}

\end{document}